\title[$L^1_{loc}$-convergence of Jacobians]{$L^{1}_{loc}$-convergence of Jacobians of Sobolev homeomorphisms via area formula}
\author[Z. Grochulska]{Zofia Grochulska}
\address{Zofia Grochulska, Department of Mathematics and Statistics, University of Jyväskylä, P.O. Box 35 (MaD), FI-40014 Finland\\ and  University of Warsaw, Banacha 2, 02-097 Warsaw, Poland} \email{zofia.z.grochulska@jyu.fi}
\thanks{Z.G.\ was supported by Polish National Science Centre grant no 2022/45/N/ST1/02977}
\subjclass[2020]{Primary: 28A75, Secondary: 26B15.}
\keywords{convergence of Jacobians, Sobolev and approximate differentiability, homeomorphism, injectivity a.e., area formula}
\newcommand*{\R}{\mathbb{R}}
\newcommand*{\eps}{\varepsilon}
\newcommand*{\Lb}{\mathcal{L}}
\newcommand*{\Hs}{\mathcal{H}}
\newcommand*{\indi}{\mathbb{1}}
\newcommand{\mres}{\mathbin{\vrule height 1.6ex depth 0pt width
0.13ex\vrule height 0.13ex depth 0pt width 0.9ex}}  
\DeclareMathOperator*{\dist}{\mathrm{dist}}
\DeclareMathOperator*{\diam}{\mathrm{diam}}
\DeclareMathOperator*{\id}{\mathrm{id}}
\DeclareMathOperator*{\degg}{\mathrm{deg}}
\theoremstyle{definition}
\newtheorem{thm}{Theorem}
\newtheorem*{thm*}{Theorem}
\newtheorem{defn}[thm]{Definition}
\newtheorem{lemma}[thm]{Lemma}
\newtheorem*{lemma*}{Lemma}
\newtheorem{rmk}[thm]{Remark}
\newtheorem{example}[thm]{Example}
\newtheorem{cor}[thm]{Corollary}
\newtheorem{que}[thm]{Question}
\newtheorem{prop}[thm]{Proposition}
\newtheorem*{prop*}{Proposition}
\begin{document}
\begin{abstract}
 We prove that given a sequence of homeomorphisms $f_k: \Omega \to \R^n$ convergent in $W^{1,p}(\Omega, \R^n)$, $p \geq 1$ for $n =2$ and $p > n-1$ for $n \geq 3$, to a homeomorphism $f$ which maps sets of measure zero onto sets of measure zero, Jacobians $Jf_k$ converge to $Jf$ in $L^1_{loc}(\Omega)$. We prove it via Federer's area formula and investigation of when $|f_k(E)| \to |f(E)|$ as $k \to \infty$ for Borel subsets $E \Subset \Omega$.
\end{abstract}
\maketitle

\section{Introduction} \label{s1}

In this paper, we are considering a bounded domain $\Omega$ in $\R^n$ for $n \geq 2$, a class $\mathcal{F}$ of subsets of $\Omega$ and a sequence of mappings $f_k: \Omega \to \R^n$ converging in some sense to a mapping $f: \Omega \to \R^n$. We aim at finding conditions under which the following property holds
\begin{equation} \label{eq1} \tag{$\ast$}
    \lim_{k \to \infty} |f_k(E)| = |f(E)| \quad \text{for every set } E \in \mathcal{F},
\end{equation}
where $|\cdot|$ stands for Lebesgue measure in $\R^n$. We will focus on the situation when $f_k, f$ are homeomorphisms onto their respective images. 

Let us observe that this is a non-trivial property, not guaranteed by uniform convergence even when $\mathcal{F}$ is the family of all compact subsets of $\Omega$. In Section~\ref{s3}, we present the following
\begin{example} \label{T2a}
Let $Q=(0,1)^n$. There is a sequence $f_k:Q \to Q$ of Lipschitz homeomorphisms of $Q$ and a compact set $C \subset Q$ such that $f_k$ converge uniformly to identity on $Q$ but
\begin{equation} \label{eq3}
    \lim_{k \to \infty} |f_k(C)| = 0 < |C|.
\end{equation}
\end{example}

On the other hand, when $\mathcal{F}$ is the family of all compact subsets of $\Omega$, then the sets $f_k(E)$ converge to $f(E)$ in Hausdorff distance. Thus, the question whether \eqref{eq1} holds is a natural question about continuity of Lebesgue or Hausdorff measure with respect to Hausdorff convergence, see Lemma~\ref{T9}. It is standard to show that Lebesgue measure is \emph{upper} semicontinuous w.\,r.\,t.\ Hausdorff convergence. There is a number of criteria concerning the more difficult \emph{lower} semicontinuity; one of the most classical results in this direction is a theorem of Go\l{}\k{a}b stating that $\mathcal{H}^1$ is lower semicontinuous w.\,r.\,t.\ Hausdorff distance on a class of continua in $\R^n$, see \cite{golab, falconer}.

However, we are interested in the case when $f_k$ and $f$ are Sobolev or almost everywhere (a.\,e.) approximately differentiable homeomorphisms, see Section~\ref{s2} for a precise definition. This is rooted in the works of Ball in nonlinear elasticity \cite{Ball81, Ball82, Ball2001, Ball2010}. In fact, our main motivation for establishing property \eqref{eq1} for sequences of homeomorphisms stems from the link between \eqref{eq1} and $L^1_{loc}$-convergence of Jacobians. Let us recall that a mapping satisfies \emph{the Lusin condition (N)} if it maps sets of measure zero onto sets of measure zero. This property is often studied and assumed in nonlinear elasticity due to its physical meaning of not creating matter out of nowhere.

Our main result is
\begin{thm} \label{1T2}
Let $\Omega$ be a~bounded domain in $\R^n$ and $f_k, f: \Omega \to \R^n$ be homeomorphisms onto their respective images, $p \geq 1$ for $n =2$ and $p > n-1$ for $n \geq 3$. Assume that $f_k$ converge to $f$ in $W^{1,p}_{loc}(\Omega, \R^n)$ and that $f$ satisfies the Lusin condition (N). Then \eqref{eq1} holds for every Borel set $E \Subset \Omega$ and the Jacobians $Jf_k$ converge in $L^1_{loc}(\Omega)$ to the Jacobian $Jf$.
\end{thm}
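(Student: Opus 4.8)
The plan is to reduce everything to Federer's area formula for almost everywhere approximately differentiable homeomorphisms, plus one soft topological fact. In the stated range of $p$ a Sobolev homeomorphism is approximately differentiable at almost every point of $\Omega$; writing $N_g$ for the (Lebesgue-null) set where a homeomorphism $g$ fails to be approximately differentiable, the area formula together with injectivity of $g$ gives, for every Borel set $B\subseteq\Omega$,
\[
\int_B |Jg|\,dx = |g(B\setminus N_g)| \le |g(B)|, \qquad |g(B)| - \int_B |Jg|\,dx = |g(B\cap N_g)|,
\]
the defect $|g(B\cap N_g)|$ vanishing exactly when $g$ satisfies the Lusin condition (N). Applied to $f$ this yields the identity $|f(B)| = \int_B Jf\,dx$ for every Borel $B\Subset\Omega$; in particular $Jf\in L^1_{loc}(\Omega)$, so the statement makes sense. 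Since a Sobolev homeomorphism has Jacobian of constant sign and $f_k\to f$, I may assume all maps are orientation preserving, so $Jf_k,Jf\ge 0$ a.e.; and after passing to a subsequence I may assume $\nabla f_k\to\nabla f$ a.e., hence $Jf_k\to Jf$ a.e. Every subsequence of $(f_k)$ again converges in $W^{1,p}_{loc}$, so once the conclusions are established along such a subsequence they follow for the full sequence by the usual subsequence argument.

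The one non-elementary ingredient is that $W^{1,p}_{loc}$-convergence of homeomorphisms to a homeomorphism forces \emph{locally uniform} convergence in this range of $p$ — a known feature of Sobolev homeomorphisms (immediate from Morrey's embedding when $p>n$, and relying on their monotonicity in the remaining range; see Section~\ref{s2}). Granting this, I would first establish the \emph{upper} semicontinuity
\[
\limsup_{k\to\infty} |f_k(A)| \le |f(A)| \qquad\text{for every open }A\Subset\Omega\text{ with }|\partial A|=0.
\]
Indeed $f_k\to f$ uniformly on the compact set $\overline A$, so $f_k(\overline A)\to f(\overline A)$ in Hausdorff distance and $\limsup_k |f_k(\overline A)|\le|f(\overline A)|$ by upper semicontinuity of Lebesgue measure (Lemma~\ref{T9}); and $|f(\overline A)| = |f(A)| + |f(\partial A)| = |f(A)|$, since $|\partial A|=0$ and $f$ satisfies (N). That one gets only the upper, and not the lower, semicontinuity from uniform convergence is precisely Example~\ref{T2a}.

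For the Jacobians, fix an open cube $Q$ with $\overline Q\Subset\Omega$. Combining the area-formula bound $\int_Q Jf_k \le |f_k(Q)|$, the upper semicontinuity above, and the identity for $f$,
\[
\limsup_{k\to\infty}\int_Q Jf_k\,dx \ \le\ \limsup_{k\to\infty} |f_k(Q)| \ \le\ |f(Q)| \ =\ \int_Q Jf\,dx,
\]
while Fatou's lemma (using $Jf_k\to Jf$ a.e.) gives $\liminf_k \int_Q Jf_k\,dx \ge \int_Q Jf\,dx$; hence $\int_Q Jf_k \to \int_Q Jf$. As $Jf_k,Jf\ge0$, $Jf_k\to Jf$ a.e., and the integrals converge, Scheff\'e's lemma gives $Jf_k\to Jf$ in $L^1(Q)$; covering an arbitrary $\Omega'\Subset\Omega$ by finitely many such cubes yields convergence in $L^1(\Omega')$, and the subsequence argument then gives $Jf_k\to Jf$ in $L^1_{loc}(\Omega)$ for the full sequence.

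Finally, for a Borel set $E\Subset\Omega$ choose an open $A$ with $E\subset A\Subset\Omega$ and $|\partial A|=0$ (a tubular neighbourhood of $\overline E$ of suitable radius). The lower bound is immediate: $|f_k(E)|\ge\int_E Jf_k \to \int_E Jf = |f(E)|$ by the $L^1_{loc}$-convergence of Jacobians. For the upper bound, approximate differentiability being a local property, $E\cap N_{f_k}\subseteq A\cap N_{f_k}$, so
\[
|f_k(E)| = \int_E Jf_k\,dx + |f_k(E\cap N_{f_k})| \le \int_E Jf_k\,dx + \Bigl(|f_k(A)| - \int_A Jf_k\,dx\Bigr),
\]
and the right-hand side tends to $|f(E)| + \bigl(|f(A)| - |f(A)|\bigr) = |f(E)|$, using the upper semicontinuity for $A$ and the convergence $\int_A Jf_k\to\int_A Jf=|f(A)|$. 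Hence $|f_k(E)|\to|f(E)|$. The main obstacle is exactly that the approximating maps $f_k$ need \emph{not} satisfy condition (N), so the area formula is available for them only as an inequality; the way around it is the two-sided squeeze above — Fatou for the liminf, Hausdorff upper semicontinuity (together with (N) for $f$ to discard the boundary) for the limsup, both on sets with null boundary — combined with the observation that the uncontrolled "bad image" $|f_k(E\cap N_{f_k})|$ is dominated by the defect $|f_k(A)| - \int_A Jf_k$, which the argument forces to zero. The secondary, more technical point is the locally uniform convergence $f_k\to f$, which is where the precise hypothesis on $p$ is really used.
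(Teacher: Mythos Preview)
Your argument is correct, and it follows a genuinely different route from the paper's. Both proofs rest on the same three pillars --- Federer's area formula in the form of Corollary~\ref{T26}, upper semicontinuity of $|f_k(K)|$ via Hausdorff convergence (Lemma~\ref{T9}), and the locally uniform convergence supplied by Proposition~\ref{T16} --- but the logical order is reversed. The paper first establishes \eqref{eq1} for all Borel $E\Subset\Omega$ (Theorem~\ref{1T1}), then uses \eqref{eq1} to show that $\{|Jf_k|\}$ is locally uniformly integrable, and concludes $Jf_k\to Jf$ in $L^1_{loc}$ by Vitali's theorem (Theorem~\ref{T1}). You instead prove $L^1_{loc}$-convergence first, by squeezing $\int_Q Jf_k$ between a Fatou lower bound and the Hausdorff upper bound $|f(\overline Q)|=|f(Q)|$ on cubes with null boundary and invoking Scheff\'e's lemma, and only then deduce \eqref{eq1} via the elegant ``defect'' estimate $|f_k(E)|-\int_E|Jf_k|\le |f_k(A)|-\int_A|Jf_k|\to 0$. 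Your path is arguably more elementary (Scheff\'e in place of uniform integrability/Vitali) and isolates precisely where the failure of condition~(N) for the $f_k$ enters; the paper's path has the advantage of proving \eqref{eq1} independently of the Jacobian convergence, which is the formulation of Theorem~\ref{1T1}. Two small remarks: the constant--sign property of $Jf_k$ you invoke is indeed known in the range $p>n-1$ (and $p\ge 1$ for $n=2$), but it is not needed --- one can run the whole argument with $|Jf_k|$ and pass from $|Jf_k|\to|Jf|$ in $L^1$ to $Jf_k\to Jf$ in $L^1$ by the generalized dominated convergence theorem; and to make the identity $|g(B)|-\int_B|Jg|=|g(B\cap N_g)|$ literally true one should take $N_g$ to be Federer's null ``redefinition'' set from Theorem~\ref{1T7} (outside of which $g$ satisfies~(N)) rather than merely the set of non--approximate--differentiability.
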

The assumption that $f$ satisfies the Lusin condition (N) is necessary. Ponomarev in \cite{Ponomarev} showed that for $p < n$, there is a homeomorphism $f \in W^{1,p}((0,1)^n, \R^n)$ which maps a set of zero measure $C$ onto a set of positive measure. It can be constructed as a $W^{1,p}$-limit of a sequence of Lipschitz homeomorphisms $f_k$ (see \cite{KauhanenKoskelaMaly} and \cite[Chapter 4]{Distortion}) for a modern treatment of this construction). Therefore, $|f_k(C)|=0$ whereas $|f(C)| > 0$, which contradicts \eqref{eq1}.

Also, it is important that $E$ is a Borel set. Since $f_k$ do not need to satisfy the Lusin condition (N), $f_k(E)$ may not be measurable if $E$ is assumed to be only measurable. In fact, any function without the Lusin condition (N) (e.\,g., the Cantor function) maps some measurable set of measure zero onto a non-measurable set. 

One of the most challenging questions about Sobolev homeomorphisms, which stems from nonlinear elasticity, is the Ball--Evans question \cite{Ball2001, Ball2010, evans}: given a homeomorphism $f \in W^{1,p}(\Omega, \R^n)$, does there exist a sequence of diffeomorphisms converging to $f$ in $W^{1,p}(\Omega, \R^n)$? It remains unsolved in dimension $n=3$, the answer is \emph{no} in dimensions $n \geq 4$ (\cite{HenclVejnar2016}) and \emph{yes} in dimension $n=2$. This last result was shown for $p \geq 1$ by Iwaniec, Kovalev, Onninen in \cite{Iwaniec2010} and for $p=1$ by Hencl and Pratelli in \cite{HenclPratelli2018}. They used an earlier result by Mora--Corral and Pratelli \cite{MoraCorral2014}. These important results together with Theorem~\ref{1T2} imply the following

\begin{cor}[\cite{HenclPratelli2018}, \cite{MoraCorral2014}, Theorem~\ref{1T2}]
Suppose that $\Omega$ is a bounded domain in $\R^2$ and that $f \in W^{1,p}(\Omega, \R^2)$, $p \geq 1$, is a homeomorphism which satisfies the Lusin condition (N). Then, there exists a sequence of diffeomorphisms $f_k$ converging to $f$ in $W^{1,p}(\Omega, \R^2)$ and $Jf_k$ converge to $Jf$ in $L^{1}_{loc}(\Omega)$.
\end{cor}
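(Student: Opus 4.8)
The proof is a direct combination of Theorem~\ref{1T2} with the affirmative answer to the Ball--Evans question in the plane, so the plan reduces to checking that the hypotheses match up. First I would invoke the approximation theorem: since $\Omega \subset \R^2$ is a bounded domain and $f \in W^{1,p}(\Omega, \R^2)$, $p \geq 1$, is a homeomorphism, the results of Iwaniec--Kovalev--Onninen \cite{Iwaniec2010} and of Hencl--Pratelli \cite{HenclPratelli2018} (the latter relying on Mora--Corral--Pratelli \cite{MoraCorral2014} for the endpoint $p=1$) yield a sequence of diffeomorphisms $f_k \colon \Omega \to \R^2$ with $f_k \to f$ in $W^{1,p}(\Omega, \R^2)$. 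The Lusin condition (N) on $f$ plays no role here; it will be used only in the final step.

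Next I would record the elementary observations needed to feed this sequence into Theorem~\ref{1T2}. Each $f_k$, being a diffeomorphism, is a homeomorphism onto its image $f_k(\Omega)$, and $f$ is by hypothesis a homeomorphism onto $f(\Omega)$, so all the maps lie within the scope of that theorem. Furthermore, convergence in $W^{1,p}(\Omega, \R^2)$ trivially implies convergence in $W^{1,p}_{loc}(\Omega, \R^2)$, by restricting to subdomains $\Omega' \Subset \Omega$ and using $\|f_k - f\|_{W^{1,p}(\Omega')} \le \|f_k - f\|_{W^{1,p}(\Omega)}$.

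Finally I would apply Theorem~\ref{1T2} to the sequence $(f_k)$: for $n=2$ the admissible range is $p \geq 1$, the maps $f_k, f$ are homeomorphisms onto their images, $f_k \to f$ in $W^{1,p}_{loc}(\Omega, \R^2)$, and $f$ satisfies the Lusin condition (N); hence $Jf_k \to Jf$ in $L^1_{loc}(\Omega)$. Since the corollary also asserts the \emph{existence} of such a diffeomorphic approximating sequence --- which is exactly the output of the first step --- the proof is complete.

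I do not anticipate a genuine obstacle in assembling this argument: all of the depth is contained in the cited planar Ball--Evans results and in Theorem~\ref{1T2} itself, and the corollary is only a compatibility check. The single point I would be careful about is that the approximation is available all the way down to $p=1$ --- this is the content of \cite{HenclPratelli2018} together with \cite{MoraCorral2014} --- so that it matches the full range $p \geq 1$ in which Theorem~\ref{1T2} holds when $n=2$.
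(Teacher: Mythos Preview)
Your proposal is correct and matches the paper's approach exactly: the corollary is presented as an immediate consequence of the planar Ball--Evans approximation results cited together with Theorem~\ref{1T2}, and your write-up simply makes explicit the trivial hypothesis checks (diffeomorphisms are homeomorphisms, $W^{1,p}$-convergence implies $W^{1,p}_{loc}$-convergence). There is nothing to add.
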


The proof of Theorem~\ref{1T2} consists of two observations, which we state below as Theorems \ref{T1} and \ref{1T1}. We state them in a more general setting than needed for the proof of Theorem~\ref{1T2} as we believe they are of independent interest. We say that a mapping $g: \Omega \to \R^n$ is injective a.\,e.\ if there is a set of measure zero $N \subset \Omega$ such that $f|_{\Omega \setminus N}$ is injective. This is sometimes called injectivity a.\,e.\ in the domain. Injectivity a.\,e.\ is a notion often used in nonlinear elasticity and related research, see \cite{Ball81, MullerSpector, Barchiesi, Bouchala}.

The first observation concerns the already mentioned link between \eqref{eq1} and $L^{1}_{loc}$-convergence of Jacobians. We state it below, denoting the approximate derivative of $f$ as $D_{\rm a}f$ and the approximate Jacobian as $J_{\rm a }f$.

\begin{thm} \label{T1}
Let $\Omega$ be a bounded domain in $\R^n$ and $f_k: \Omega \to \R^n$ be continuous, injective a.\,e.\ and approximately differentiable a.\,e. Suppose that $f: \Omega \to \R^n$ satisfies the Lusin condition (N) and is approximately differentiable a.\,e.\ and that $J_{\rm a}f$ is locally integrable. Assume that $D_{\rm a} f_k$ converge to $D_{\rm a}f$ in $\mathcal{L}^n$-measure and that for every Borel set $E \Subset \Omega$, \eqref{eq1} holds. Then $J_{\rm a} f_k$ converge to $J_{\rm a} f$ in $L^1_{loc}(\Omega)$.
\end{thm}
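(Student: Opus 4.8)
The plan is to use the area formula to reduce $L^1_{loc}$-convergence of the approximate Jacobians to the convergence hypothesis \eqref{eq1}, combined with a biting-type argument controlled by $L^n$-convergence of the derivatives. First I would fix a ball $B \Subset \Omega$ and recall that for an a.e.\ approximately differentiable, a.e.\ injective, continuous map $g$, the area formula gives $\int_B |J_{\rm a}g| = \int_{\R^n} N(g, B, y)\, dy$ where $N$ counts preimages in the set of approximate differentiability; since $g$ is injective a.e.\ this equals $|g(B)|$ up to the measure-zero exceptional set, so in fact $\int_A |J_{\rm a}g| = |g(A)|$ for every Borel $A \subset B$ on which $g$ is injective after removing a null set (for $f$ this uses condition (N) to discard the bad null set harmlessly). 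Thus the hypothesis \eqref{eq1} says precisely that $\int_E |J_{\rm a}f_k| \to \int_E |J_{\rm a}f|$ for every Borel $E \Subset \Omega$, i.e.\ the \emph{absolute values} $|J_{\rm a}f_k|$ converge to $|J_{\rm a}f|$ weakly-$*$ against indicators, hence (by a standard approximation) weakly in $L^1_{loc}$, \emph{and} their integrals over the fixed $B$ converge.

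Next I would upgrade weak convergence plus convergence of the total mass to strong $L^1$-convergence of the nonnegative functions $|J_{\rm a}f_k| \to |J_{\rm a}f|$ on $B$: this is the classical fact that for nonnegative $L^1$ functions, weak convergence together with convergence of norms implies strong convergence (equivalently, apply the Dunford--Pettis / Vitali route — weak $L^1$ convergence forces uniform integrability of $\{|J_{\rm a}f_k|\}$, and then convergence of the integrals over every Borel subset pins down the limit and gives $L^1$ convergence). Finally I would remove the absolute value. Since $D_{\rm a}f_k \to D_{\rm a}f$ in $\Lb^n$-measure, after passing to a subsequence $D_{\rm a}f_k \to D_{\rm a}f$ a.e., so $J_{\rm a}f_k \to J_{\rm a}f$ a.e.\ and in particular $\operatorname{sgn} J_{\rm a}f_k \to \operatorname{sgn} J_{\rm a}f$ a.e.\ on the set where $J_{\rm a}f \neq 0$; writing $J_{\rm a}f_k = |J_{\rm a}f_k|\operatorname{sgn}J_{\rm a}f_k$ and combining the $L^1$ convergence of $|J_{\rm a}f_k|$ with the a.e.\ (hence, by uniform integrability, $L^1$) convergence of the signs yields $J_{\rm a}f_k \to J_{\rm a}f$ in $L^1(B)$. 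A subsequence argument promotes this to the full sequence.

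The main obstacle I anticipate is the bookkeeping in the area-formula step: the maps $f_k$ need not satisfy condition (N), so $f_k(E)$ may fail to be measurable for general Borel $E$, and the area formula must be applied on the (Borel) set of points where $f_k$ is approximately differentiable with the a.e.\ injectivity exceptional null set excised — one must check that the resulting identity $\int_E |J_{\rm a}f_k| = |f_k(E)|$ genuinely holds for \emph{every} Borel $E \Subset \Omega$ (for $f_k$) and for $f$ (here condition (N) is exactly what makes the excised null set contribute nothing on the image side). The second, more technical point is justifying that convergence of $\int_E|J_{\rm a}f_k|$ for every Borel $E$ already forces uniform integrability of the sequence $\{|J_{\rm a}f_k|\}_k$ on $B$: this is where one invokes that set-function convergence on a $\sigma$-algebra, together with a Nikodym-type / Vitali--Hahn--Saks argument, yields equi-absolute-continuity, after which strong $L^1$ convergence is automatic. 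Everything else — passing to subsequences, splitting off the sign, and patching local convergence — is routine.
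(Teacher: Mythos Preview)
Your overall plan --- area formula, uniform integrability, Vitali --- is the same as the paper's, but the step you flag as ``the main obstacle'' is a genuine gap you do not close. The identity $\int_E |J_{\rm a}f_k| = |f_k(E)|$ is \emph{false} in general when $f_k$ lacks condition~(N): if $f_k$ sends a null set $Z\subset E$ onto a set of positive measure, then $|f_k(E)|$ exceeds $\int_E |J_{\rm a}f_k|$ by at least $|f_k(Z)|>0$. The area formula only produces equality after redefining $f_k$ on a Borel null set, and that redefinition changes $|f_k(E)|$. Consequently \eqref{eq1} does \emph{not} say that $\int_E|J_{\rm a}f_k|\to\int_E|J_{\rm a}f|$, and you cannot feed the measures $\mu_k(E)=\int_E|J_{\rm a}f_k|$ into Vitali--Hahn--Saks by the route you describe.

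The paper avoids this by never claiming equality for $f_k$: it uses only the inequality $\int_E|J_{\rm a}f_k|\le |f_k(E)|$ (Corollary~\ref{T26}(ii)), valid for every Borel $E$ regardless of~(N). That inequality, together with \eqref{eq1} and the absolute continuity of $E\mapsto |f(E\cap K)|$ with respect to Lebesgue measure (here condition~(N) for $f$ and local integrability of $J_{\rm a}f$ enter via Corollary~\ref{T26}(i)), already gives uniform integrability of $\{J_{\rm a}f_k\}_k$ on each compact $K$. Lemma~\ref{T13} supplies $J_{\rm a}f_k\to J_{\rm a}f$ in measure, and Vitali's convergence theorem applied directly to the signed functions $J_{\rm a}f_k$ finishes; your detour through $L^1$-convergence of $|J_{\rm a}f_k|$ followed by reattaching the sign is unnecessary. (Your Vitali--Hahn--Saks idea \emph{can} be rescued: combine the one-sided bound $\limsup_k\int_E|J_{\rm a}f_k|\le\int_E|J_{\rm a}f|$ coming from the inequality and \eqref{eq1} with Fatou along a.e.\ convergent subsequences to obtain $\int_E|J_{\rm a}f_k|\to\int_E|J_{\rm a}f|$ for every Borel $E\subset K$, and then invoke Vitali--Hahn--Saks. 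But this is longer than the paper's direct argument.)
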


The proof of Theorem~\ref{T1} relies mainly on the area formula originally proved by Federer in \cite{federer2}. In particular, he proved that the area formula (and hence the change of variables formula) holds for a.\,e.\ approximately differentiable homeomorphisms which satisfy the Lusin condition (N). This formulation of this general and deep theorem seems to be relatively unknown and by describing it in detail in Section~\ref{s2}, we aim at advertising its usefulness. One of its immediate corollaries is the fact that for an a.\,e.\ approximately differentiable homeomorphism $f$, $J_{\rm a} f$ is locally integrable, see Remark~\ref{T27}.

Let us note that the assumptions of Theorem~\ref{T1} are not sufficient for \eqref{eq1} to hold, even if $f_k, f$ are homeomorphisms which satisfy the Lusin condition (N), as seen in
\begin{example} \label{1T3}
Let $Q = (0,1)^n$. There are a.\,e.\ approximately differentiable homeomorphisms $f_k, f:Q \to \R^n$, which satisfy the Lusin condition (N) such that
\begin{enumerate}[(i)]
    \item $D_{\rm a}f_k$ converges to $D_{\rm a}f$ in $\mathcal{L}^n$-measure;
    \item there is a compact set $C \subset \Omega$ with $\lim_{k \to \infty} |f_k(C)| > |f(C)|$;
    \item $\{J_{\rm a } f_k\}_k$ are not uniformly integrable on $Q$.
\end{enumerate}
\end{example}

The second ingredient of the proof of Theorem~\ref{1T2} is Theorem~\ref{1T1} below, which gives a sufficient condition for \eqref{eq1} to hold.
\begin{thm} \label{1T1}
Let $\Omega$ be a bounded domain in $\R^n$ and $f_k: \Omega \to \R^n$ be continuous, injective a.\,e.\ and approximately differentiable a.\,e. Assume that $f_k$ converge locally uniformly to $f: \Omega \to \R^n$, that $D_{\rm a}f_k$ converge in $\mathcal{L}^n$-measure to $D_{\rm a}f$ and that $f$ satisfies the Lusin condition (N). Then \eqref{eq1} holds for any Borel set $E \Subset \Omega$.
\end{thm}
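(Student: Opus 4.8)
The plan is to bound $|f_k(E)|$ and $|f(E)|$ from above and below by comparing each with the measure of images of open and compact sets that sandwich $E$, and then to exploit the area formula to rewrite these measures as integrals of $J_{\rm a}f_k$ and $J_{\rm a}f$ over the domain. First I would record the easy ``upper'' half: if $K \Subset \Omega$ is compact and $U$ is an open set with $K \subset U \Subset \Omega$, then local uniform convergence $f_k \to f$ forces $f_k(K) \subset f(U)$ for all large $k$ (because $f(U)$ is open, $f(K)$ is a compact subset of it, and $\|f_k - f\|_{\infty, \overline U}\to 0$), hence $\limsup_k |f_k(K)| \le |f(U)|$; and symmetrically $f(K) \subset f_k(U)$ for large $k$, giving $|f(K)| \le \liminf_k |f_k(U)|$. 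Approximating a Borel set $E \Subset \Omega$ from inside by compacta and from outside by open sets $U \Subset \Omega$, together with $|f(\partial\text{-type error})|$ controlled by the Lusin condition (N) for $f$ when the approximating sets are chosen so that the symmetric differences have small measure, should reduce everything to proving the single inequality $\liminf_k |f_k(U)| \ge |f(U)|$ for open $U \Subset \Omega$ (the reverse inequality $\limsup_k |f_k(U)|\le |f(\overline U)|$ being the semicontinuity already noted in the introduction), and then letting $U \downarrow E$.

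The heart of the matter is therefore the lower bound $\liminf_k |f_k(U)| \ge |f(U)|$. Here I would use the area formula from Section~\ref{s2}: since each $f_k$ is continuous, injective a.\,e.\ and approximately differentiable a.\,e., for any measurable $A \Subset \Omega$ we have $|f_k(A)| \ge \int_A |J_{\rm a}f_k|$ (equality would need the Lusin condition (N) for $f_k$, which we do not have, but the inequality ``$\ge$'' holds because the image of the set where $f_k$ is approximately differentiable with its a.\,e.\ injective part already has measure $\int |J_{\rm a}f_k|$, and the rest only adds). Thus it suffices to show $\liminf_k \int_U |J_{\rm a}f_k| \ge |f(U)| = \int_U |J_{\rm a}f|$, the last equality being the area formula for $f$, which \emph{does} satisfy condition (N). Now $J_{\rm a}f_k \to J_{\rm a}f$ in $\Lb^n$-measure on $U$ because $D_{\rm a}f_k \to D_{\rm a}f$ in measure and the determinant is continuous; so by Fatou's lemma applied along a subsequence realizing the liminf and converging a.\,e., $\liminf_k \int_U |J_{\rm a}f_k| \ge \int_U |J_{\rm a}f|$. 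This closes the lower bound.

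Assembling: for $E \Subset \Omega$ Borel, pick open $U_j \downarrow$ and compact $K_j \uparrow$ with $K_j \subset E \subset U_j \Subset \Omega$ and $|U_j \setminus K_j| \to 0$; since $f$ satisfies (N) and the area formula gives $|f(U_j)| = \int_{U_j} |J_{\rm a}f| \to \int_E |J_{\rm a}f| = |f(E)|$ and likewise $|f(K_j)| \to |f(E)|$ (using local integrability of $J_{\rm a}f$ from Remark~\ref{T27}). Combining the two displayed sandwich inequalities with the upper semicontinuity $\limsup_k |f_k(K_j)| \le |f(U_j)|$ and the lower bound $\liminf_k |f_k(U_j)| \ge |f(U_j)|$, and using monotonicity $|f_k(K_j)| \le |f_k(E)| \le |f_k(U_j)|$, one gets $|f(K_j)| \le \liminf_k |f_k(E)| \le \limsup_k |f_k(E)| \le |f(U_j)|$ for each $j$; letting $j \to \infty$ squeezes both sides to $|f(E)|$.

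The main obstacle I anticipate is the bookkeeping in reducing a general Borel set to open and compact sets: one needs that the image under $f$ of the small-measure ``collar'' $U_j \setminus K_j$ has small measure, which is exactly where the Lusin condition (N) for $f$ enters, but to even phrase $|f(E)| = \int_E |J_{\rm a}f|$ for Borel (not open) $E$ one must invoke the full strength of Federer's area formula for a.\,e.\ approximately differentiable maps satisfying (N), valid on arbitrary Borel subsets. A secondary subtlety is justifying the inequality $|f_k(A)| \ge \int_A |J_{\rm a}f_k|$ without condition (N) for $f_k$: one restricts to the Borel set of approximate differentiability points, discards a null set to get genuine injectivity, and applies the area formula there, the outer image only being larger.
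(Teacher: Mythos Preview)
Your overall strategy---lower bound via the area formula and Fatou, upper bound via upper semicontinuity on compact sets, then sandwich---is exactly the paper's route. However, the execution has two genuine errors.

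\textbf{First}, the claim ``$f(U)$ is open'' is unjustified: in Theorem~\ref{1T1} the limit map $f$ is only continuous, approximately differentiable a.e.\ and satisfies condition~(N); it is not assumed injective (not even a.e.), so there is no reason for $f$ to be an open map. Likewise $f_k$ is only injective a.e., so $f_k(U)$ need not be open either, and the ``symmetric'' inclusion $f(K)\subset f_k(U)$ fails in general. Fortunately this entire topological discussion is unnecessary: your Fatou argument in the second paragraph works verbatim for \emph{any} Borel set $E\Subset\Omega$, not just open $U$, and yields directly
\[
\liminf_{k\to\infty}|f_k(E)|\ \ge\ \int_E |J_{\rm a}f|\ \ge\ |f(E)|
\]
(the last inequality is Corollary~\ref{T26}(i), using condition~(N) for $f$). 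This is precisely inequality~\eqref{1eq3} in the paper.

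\textbf{Second}, your final sandwich does not close. You write $\limsup_k|f_k(E)|\le |f(U_j)|$, but the only ingredients you have are the \emph{lower} bound $\liminf_k|f_k(U_j)|\ge|f(U_j)|$ and the monotonicity $|f_k(E)|\le|f_k(U_j)|$; these give no upper bound on $\limsup_k|f_k(U_j)|$. What does work is replacing $U_j$ by a compact set and invoking Lemma~\ref{T9}. The paper does this cleanly by choosing a compact $K\subset E$ and a compact $C\supset E\setminus K$ with $|f(C)|<\eps$ (possible since $J_{\rm a}f$ is locally integrable), and then applying Lemma~\ref{T9} to \emph{both} compacts:
\[
\limsup_{k\to\infty}|f_k(E)|\ \le\ \limsup_{k\to\infty}\bigl(|f_k(K)|+|f_k(C)|\bigr)\ \le\ |f(K)|+|f(C)|\ \le\ |f(E)|+\eps.
\]
Your version can also be repaired by using the compact set $\overline{U_j}$ in place of $U_j$ (ensuring $|\partial U_j|=0$), so that Lemma~\ref{T9} gives $\limsup_k|f_k(E)|\le|f(\overline{U_j})|$ and then $|f(\overline{U_j}\setminus E)|\le\int_{\overline{U_j}\setminus E}|J_{\rm a}f|\to 0$.
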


In fact, to prove Theorem~\ref{1T2}, we show the folklore result that given a sequence of Sobolev homeomorphisms $f_k$ convergent in $W^{1,p}_{loc}$ for $p$ as in Theorem~\ref{1T2}, $f_k$ converges locally uniformly, see Proposition~\ref{T16}.

This follows from the assumption $p > n-1$ and the use of Morrey's inequality on $(n-1)$-dimensional hyperplanes. This also gives rise to the following
\begin{que}
Let $\Omega$ be a bounded domain in $\R^n$, $n \geq 3$, and $f_k, f: \Omega \to \R^n$ be homeomorphisms onto their respective images, $p \leq n-1$. Assume that $f_k$ converge to $f$ in $W^{1,p}_{loc}(\Omega, \R^n)$ and that $f$ satisfies the Lusin condition (N). Does \eqref{eq1} hold for every Borel set $E \Subset \Omega$?
\end{que}
\noindent Should the answer be \emph{yes}, Theorem~\ref{T1} would imply $L^{1}_{loc}$-convergence of Jacobians. It is natural to suppose that the answer is negative if $p < n-1$. The case $p = n-1$ seems more challenging.

From the point of view of calculus of variations or nonlinear elasticity, one would also be interested in sequences of homeomorphisms or a.\,e.\ injective mappings whose limit is not a homeomorphism. Obviously, without this assumption, checking \eqref{eq1} or $L^1_{loc}$-convergence of Jacobians is much more complex. In \cite{Sbornik} (see also \cite{alexandrova99} and \cite[Chapter 3.6]{BogachevBook}), the authors discussed property \eqref{eq1} from a more probabilistic angle, not focusing on homeomorphisms. They were interested in convergence in variation of pushforward measures (or images of measures) and their results found application in probability, see for example \cite{breton, kulik}.

Using essentially the same argument as the authors of \cite{Sbornik} (compare with Corollary 2.5, Corollary 2.6), we prove
\begin{thm} \label{T6}
Let $\Omega$ be a~bounded domain in $\R^n$ and $f_k: \Omega \to \R^n$ be continuous mappings converging locally uniformly to a~mapping $f: \Omega \to \R^n$. Assume that $f_k, f$ satisfy the Lusin condition (N) and are a.\,e.\ approximately differentiable. Moreover, assume that $\{J_{\rm a} f_k \}_k$ is uniformly integrable on every compact subset of $\Omega$. Suppose that for any subdomain $U \Subset \Omega$ with $|\partial U| = 0$,
\begin{equation} \label{eq4}
    \degg \, (f, U, y) \neq 0 \text{ for a.\,e.\ } y \in f(U).
\end{equation}
Then for any measurable subset $E \Subset \Omega$, \eqref{eq1} holds.
\end{thm}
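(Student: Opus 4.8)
The plan is to first establish \eqref{eq1} for the auxiliary class $\mathcal V$ of all open sets $V$ with $V \Subset \Omega$ and $|\partial V| = 0$, and then to pass to an arbitrary measurable $E \Subset \Omega$ by inner and outer regularity of Lebesgue measure, using the uniform integrability of $\{J_{\rm a} f_k\}$ to absorb the errors; this is in essence the argument of \cite{Sbornik}. Throughout, since $f_k$ and $f$ are continuous and satisfy condition (N), they map measurable sets to measurable sets, so all images below are measurable and, being contained in compact images of compacta, of finite measure.

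\emph{Step 1: the class $\mathcal V$.} Fix $V \in \mathcal V$. For the upper bound, uniform convergence of $f_k$ to $f$ on the compact set $\overline V$ gives that $f_k(\overline V)$ is contained in the $\eta$-neighborhood of $f(\overline V)$ once $k$ is large, for every $\eta>0$; hence $\limsup_k |f_k(V)| \le |f(\overline V)|$, since the $\eta$-neighborhoods of the compact set $f(\overline V)$ decrease to it as $\eta\to 0^+$. Moreover $|f(\overline V)| = |f(V)|$ because $|f(\partial V)| = 0$ by (N) and $|\partial V| = 0$. For the lower bound I use degree theory. Write $V = \bigcup_i V_i$ with $V_i$ the (at most countably many) connected components; each $V_i$ is a subdomain with $V_i \Subset \Omega$ and $\partial V_i \subseteq \partial V$, so $|\partial V_i| = 0$ and \eqref{eq4} applies to $V_i$, yielding $\degg(f, V_i, y) \neq 0$ for a.e.\ $y \in f(V_i)$. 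Since $f_k \to f$ uniformly on $\overline{V_i}$, for any fixed $y \notin f(\partial V_i)$ the affine homotopy $(1-t)f + t f_k$ stays away from $y$ on $\partial V_i$ once $\max_{\overline V}|f_k - f| < \dist(y, f(\partial V_i))$, so $\degg(f_k, V_i, y) = \degg(f, V_i, y)$ for all large $k$, and $\degg(f_k, V_i, y) \neq 0$ forces $y \in f_k(V_i) \subseteq f_k(V)$. As $|f(\partial V_i)| = 0$, it follows that for a.e.\ $y \in f(V) = \bigcup_i f(V_i)$ one has $\indi_{f_k(V)}(y) = 1$ for all large $k$, i.e.\ $\indi_{f(V)} \le \liminf_k \indi_{f_k(V)}$ a.e. Fatou's lemma gives $|f(V)| \le \liminf_k |f_k(V)|$, and combined with the upper bound, $\lim_k |f_k(V)| = |f(V)|$ for every $V \in \mathcal V$.

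\emph{Step 2: general $E$.} Let $E \Subset \Omega$ be measurable and fix a compact set $L$ with $E$ in its interior and $L \Subset \Omega$. Given $\eps > 0$, exhausting $E$ by compacta up to a null set and using (N) for $f$, choose a compact $K \subseteq E$ with $|f(K)| \le |f(E)| \le |f(K)| + \eps$ and with $|E \setminus K|$ so small that $\sup_k \int_{E \setminus K} |J_{\rm a} f_k| < \eps$, which is possible by uniform integrability of $\{J_{\rm a} f_k\}$ on $L$. For $t>0$ put $V_t = \{x : \dist(x,K) < t\}$. By the coarea formula $|\partial V_t| = 0$ for a.e.\ $t$; also $V_t \Subset \Omega$ for small $t$, $\bigcap_{t>0}V_t = K$ gives $|V_t\setminus K|\to 0$, and $\bigcap_{t>0} f(V_t) = f(K)$ — a point lying in every $f(V_t)$ is the image of a limit point of the associated preimages, which lies in $K$ by compactness and continuity of $f$ — so $|f(V_t)| \to |f(K)|$ as $t\to 0^+$. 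Hence we may fix one $t$ with $V_t \in \mathcal V$, $V_t \subseteq L$, $\sup_k \int_{V_t \setminus K} |J_{\rm a} f_k| < \eps$, and $|f(V_t)| < |f(K)| + \eps$. From $f_k(K) \subseteq f_k(E) \subseteq f_k(K) \cup f_k(E\setminus K)$ and $f_k(V_t) \subseteq f_k(K) \cup f_k(V_t\setminus K)$, together with $|f_k(A)| \le \int_A |J_{\rm a} f_k|$ (area formula for $f_k$), we get $|f_k(V_t)| - \eps \le |f_k(K)| \le |f_k(E)| \le |f_k(V_t)| + \eps$. Taking $k\to\infty$ and invoking Step 1, both $\liminf_k |f_k(E)|$ and $\limsup_k |f_k(E)|$ lie within $\eps$ of $|f(V_t)|$, hence within $2\eps$ of $|f(K)|$, which lies within $\eps$ of $|f(E)|$; letting $\eps\to 0$ gives \eqref{eq1}.

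The main obstacle is the bookkeeping in Step 2: one must arrange that the error sets $E\setminus K$ and $V_t \setminus K$ have small Lebesgue measure \emph{and} that their $f_k$-images are simultaneously small for all $k$ — this is precisely where uniform integrability of $\{J_{\rm a} f_k\}$, rather than mere equi-boundedness in $L^1$, is indispensable — while keeping the approximating set $V_t$ inside the class $\mathcal V$; the coarea selection of $t$ with $|\partial V_t| = 0$ and the identity $\bigcap_{t>0} f(V_t) = f(K)$ are the two elementary facts that reconcile these requirements. The degree argument in Step 1 is the conceptual core but, granted \eqref{eq4} and homotopy invariance of the Brouwer degree, is routine.
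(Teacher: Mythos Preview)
Your proof is correct and follows essentially the same route as the paper: reduce from measurable sets to compacta via uniform integrability and the area inequality $|f_k(A)| \le \int_A |J_{\rm a} f_k|$, then establish the lower bound using stability of the degree under uniform convergence together with hypothesis~\eqref{eq4}. The only differences are cosmetic---you obtain the lower bound by Fatou's lemma on indicator functions where the paper instead excises a small neighborhood $G$ of $f(\partial U)$ to make the degree stability uniform in $y$, and you approximate $K$ by the metric neighborhoods $V_t$ (handling disconnectedness by passing to components) rather than by an ad hoc connected open set---but the argument is the same.
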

\noindent Here $\degg(f, U, y)$ stands for the topological degree, described in detail in Section \ref{s6}. Any homeomorphism satisfies \eqref{eq4}; in Section~\ref{s6} we state a weaker condition which also guarantees \eqref{eq4}. Even though Theorem~\ref{T6} does not talk about $L^{1}_{loc}$-convergence of Jacobians, we decided to include it to show what can be said about non-injective mappings and to point out the extent of use of area formula. Our proof clarifies and simplifies the reasoning from \cite{Sbornik}, yet it follows exactly the same line of thought as the proof of \cite[Corollary 2.5]{Sbornik}.

The paper is structured as follows. Section~\ref{s2} contains preliminaries, Section~\ref{s3} describes Examples \ref{T2a} and \ref{1T3}. In Section~\ref{s4}, we prove Theorems \ref{T1} and \ref{1T1} and in Section~\ref{s5} we prove the main Theorem~\ref{1T2}. In the last Section~\ref{s6}, we prove Theorem~\ref{T6}.

\subsection*{Acknowledgments} I would like to thank Stanislav Hencl, Pawe\l{} Goldstein and Zheng Zhu for encouragement and discussions.

\section{Preliminaries} \label{s2}
\subsection*{Notation.} Throughout this section and the rest of the paper, $| E |$ stands for Lebesgue measure of a~set $E$ and \emph{measure} and \emph{measurable} always refers to the Lebesgue measure. A \emph{homeomorphism} means a~\emph{homeomorphism onto the image} and, if not specified otherwise, $\Omega$ is a~bounded domain in $\R^n$. For $x \in \R^n$, we write $|x| = (\sum_{i=1}^n |x_i|^2)^{1/2}$ and, given a set $E \subset \R^n$, $\dist(x, E) = \inf \{ |x-y|: \, y \in E\}$. Given a compact set $C$ and a positive number $\delta$, we denote by $C_\delta$ the $\delta$-neighborhood of set $C$, i.\,e., $C_\delta := \{x: \, \dist(x, C) < \delta\}$. With $\lVert \cdot \rVert_{1,p, E}$ we denote the Sobolev $W^{1,p}(E, \R^n)$-norm and with $\indi_E$ the indicator function of the set $E$ (i.\,e.\ $\indi_E = 1$ on $E$ and $\indi_E = 0$ outisde $E$).

\subsection*{Measurability.}
If $f: \Omega \to \R^n$ is continuous and injective, it is standard to show that $f$ maps Borel sets onto Borel sets. Also, if $f$ is continuous and it satisfies the Lusin condition (N), it is standard to show that $f$ maps measurable sets onto measurable sets. Nonetheless, we shall need the following highly nontrvial
\begin{thm} \label{T17}
Let $f: \Omega \to \R^n$ be continuous. Then for any Borel set $E \subset \Omega$, $f(E)$ is measurable.
\end{thm}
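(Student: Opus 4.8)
\emph{Proof plan.} The strategy is to show that $f(E)$ is an \emph{analytic} (Suslin) subset of $\R^n$ and then to invoke the classical fact that analytic sets are Lebesgue measurable. Note at the outset why a naive approach fails: one cannot simply induct on the Borel hierarchy of $E$, because continuous images do not commute with complements (in general $f(\Omega \setminus E) \neq f(\Omega) \setminus f(E)$). This is precisely the reason analytic sets, and the more delicate closure property of the Lebesgue $\sigma$-algebra under the Suslin operation, must enter.

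\emph{Step 1: $f(E)$ is analytic.} Consider the graph $\Gamma := \{(x, f(x)) : x \in \Omega\} \subset \R^{2n}$. Since $f$ is continuous and $\Omega$ is open, $\Gamma$ is relatively closed in the open set $\Omega \times \R^n$, hence $\Gamma$ is a Borel (indeed $F_\sigma$) subset of $\R^{2n}$. As $E$ is Borel in $\R^n$, the set $E \times \R^n$ is Borel in $\R^{2n}$, so
\[
G := \Gamma \cap (E \times \R^n) = \{(x, f(x)) : x \in E\}
\]
is a Borel subset of $\R^{2n}$. Letting $\pi : \R^{2n} \to \R^n$ denote the projection onto the last $n$ coordinates, we have $f(E) = \pi(G)$, so $f(E)$ is the projection of a Borel set, which is one of the standard characterizations of an analytic subset of $\R^n$. (Equivalently: $\Omega$, being open, is a Polish space; $E$ is a Borel subset of it; and the continuous image of a Borel subset of a Polish space is analytic.)

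\emph{Step 2: every analytic subset of $\R^n$ is Lebesgue measurable.} This is the classical theorem of Lusin. One route is via the Suslin operation: every analytic set arises by applying the operation $\mathcal{A}$ to a Suslin scheme of closed sets, i.e.
\[
f(E) = \bigcup_{\sigma \in \mathbb{N}^{\mathbb{N}}} \bigcap_{m \in \mathbb{N}} F_{\sigma | m}
\]
with each $F_s$ ($s$ a finite sequence) closed, and the $\sigma$-algebra of Lebesgue measurable sets is closed under the operation $\mathcal{A}$ (Lusin--Sierpi\'nski); since closed sets are measurable, $f(E)$ is measurable. An alternative is Choquet's capacitability theorem: Lebesgue outer measure restricted to an ambient box is a Choquet capacity, analytic sets are capacitable, and a capacitable set is Lebesgue measurable.

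\emph{Main obstacle.} There is no genuine analytic difficulty here; the statement is a direct consequence of the two descriptive--set--theoretic facts above, and in the write-up the deep input (closure of the measurable $\sigma$-algebra under the Suslin operation) will simply be cited. The only point needing a little care is that $\Omega$ is a proper open subset of $\R^n$ rather than all of $\R^n$, which is handled exactly as in Step 1 by working with $\Gamma$ inside the open set $\Omega \times \R^n$ (or by noting $\Omega$ is $G_\delta$, hence Polish).
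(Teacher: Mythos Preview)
Your proposal is correct and is precisely the Suslin-sets argument the paper has in mind: the paper does not give its own proof but simply cites \cite[Theorem~2.2.13]{Federer} and attributes the result to Suslin, and Federer's treatment is exactly the route you describe (continuous images of Borel sets are Suslin/analytic, and the Lebesgue $\sigma$-algebra is closed under the Suslin operation). Your Step~1 via the graph and projection is the standard way to see that $f(E)$ is analytic, and your Step~2 is the deep input Federer supplies; there is nothing to add.
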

\noindent The statement and proof can be found in \cite[Theorem 2.2.13]{Federer}, the theorem itself goes back to Suslin.

\subsection*{Approximate differentiability.}
\begin{defn}
Let $\Omega$ be an open set in $\R^n$. We say that a~measurable mapping $f: \Omega \to \R^n$ is approximately differentiable at a~point $x_o \in \Omega$ if there is a~measurable set $E_o$ of density $1$ at $x_o$ and a~linear mapping $L_o: \Omega \to \R^{n \times n}$ such that
$$
\lim_{y \to x_o, \, y \in E_o} \frac{|f(x_o) - f(y) - L_o(x_o - y)|}{ |x_o - y  |} = 0.
$$
We then denote with $D_{\rm a} f(x_o) := L_o$, the approximate derivative of $f$ at $x_o$ and with $J_{\rm a} f(x_o) := \det D_{\rm a} f(x_o)$, the approximate Jacobian of $f$ at $x_o$.
\end{defn}
\begin{rmk}
Whitney in \cite{whitney1951} characterized mappings which are a.\,e.\ approximately differentiable as precisely these mappings which coincide with $C^1$ mappings on sets of arbitrarily large measure. That is, $f$ is a.\,e.\ approximately differentiable if and only if for any $\eps >0$, there is a $C^1$ mapping $f_\eps$ s.\,t.\ $|\{f \neq f_\eps \}| < \eps$.

If a measurable function $\tilde{f}$ coincides with an a.\,e.\ approximately differentiable function $f$ on $\Omega$, then $f$ is also a.\,e.\ approximately differentiable and $D_{\rm a} f(x) = D_{\rm a} \tilde{f}(x)$ for almost every $x \in \Omega$. For a~short overview, see \cite[Section 6]{GGH1}.

Let us recall that if $f \in W^{1,p} (\Omega, \R^n)$ for any $p \geq 1$, then $f$ is approximately differentiable a.\,e. and the weak derivative $Df$ coincides with the approximate derivative $D_{\rm a} f$ a.\,e.\ on $\Omega$, see \cite[Theorem 4, Chapter 6]{EvansGariepy}.
\end{rmk}

\subsection*{Area formula.} The main tool of this paper is the area formula proved by Federer in~\cite{federer2}, see also \cite[Theorem 3.2.3]{Federer}. The exact formulation quoted below comes from~\cite{hajlasz}. The function $y \mapsto N(f, \Omega, y)$ equals the cardinality of the set $f^{-1}(y)$ provided this set is finite and $\infty$ if this set is infinite\footnote{The function is often called the multiplicity function or the Banach indicatrix}.
\begin{thm}[Federer] \label{1T7}
Let $\Omega$ be an open set in $\R^n$ and $f: \Omega \to \R^n$ be a.\,e.\ approximately differentiable. If $f$ satisfies the Lusin condition (N), then for any measurable function $\varphi: \R^n \to \R$, we have
\begin{equation} \label{eq25}
\int_{\Omega} (\varphi \circ f)(x) |J_{\rm a} f(x)| \, dx = \int_{f(\Omega)} \varphi(y) N(f, \Omega, y) \, dy. 
\end{equation}
If $f$ does not satisfy the Lusin condition (N), $f$ can be redefined on a Borel set of measure zero so that after the redefinition, $f$ satisfies the Lusin condition (N) and \eqref{eq25} holds.
\end{thm}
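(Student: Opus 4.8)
The plan is to reduce the statement to the classical area formula for Lipschitz maps and to let the Lusin condition (N) absorb the unavoidable exceptional null set. First I would invoke Whitney's theorem, recalled above: for each $j \in \mathbb{N}$ there is a $C^1$ map $g_j \colon \Omega \to \R^n$ with $|\{x \in \Omega : f(x) \neq g_j(x)\}| < 1/j$. Writing $A_j = \{x \in \Omega : f(x) = g_j(x)\}$, I would pick Borel sets $B_j \subseteq A_j$ with $|A_j \setminus B_j| = 0$ and set $E_1 = B_1$, $E_j = B_j \setminus \bigcup_{i<j} B_i$ for $j \geq 2$, and $N = \Omega \setminus \bigcup_j E_j$. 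Then the $E_j$ are disjoint Borel sets, $N$ is Borel with $|N| = 0$, on each $E_j$ one has $f = g_j$, and since a.\,e.\ point of $E_j$ is a density point of $E_j$ the approximate and classical derivatives agree there, so $D_{\rm a} f = D g_j$ and $J_{\rm a} f = J g_j$ a.\,e.\ on $E_j$.

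Assume now that $f$ satisfies (N). It suffices to treat $\varphi \geq 0$, the general case following by splitting $\varphi$ into positive and negative parts. Fixing $j$ and applying the classical area formula for the $C^1$ (hence locally Lipschitz) map $g_j$ over the measurable set $E_j$ with weight $\varphi \circ g_j = \varphi \circ f$ (see \cite{EvansGariepy}), and using $\varphi(g_j(x)) = \varphi(y)$ on $g_j^{-1}(y)$ together with $E_j \cap g_j^{-1}(y) = E_j \cap f^{-1}(y)$, I get
\[
\int_{E_j} (\varphi \circ f)(x)\, |J_{\rm a} f(x)|\, dx \;=\; \int_{E_j} (\varphi \circ g_j)(x)\, |Jg_j(x)|\, dx \;=\; \int_{\R^n} \varphi(y)\, \#\bigl(E_j \cap f^{-1}(y)\bigr)\, dy,
\]
where $y \mapsto \#(E_j \cap f^{-1}(y))$ is measurable. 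Summing over $j$, using monotone convergence on the right and $|N| = 0$ on the left, gives
\[
\int_{\Omega} (\varphi \circ f)(x)\, |J_{\rm a} f(x)|\, dx \;=\; \int_{\R^n} \varphi(y) \sum_{j} \#\bigl(E_j \cap f^{-1}(y)\bigr)\, dy \;=\; \int_{\R^n} \varphi(y)\, \#\bigl(f^{-1}(y) \setminus N\bigr)\, dy.
\]
Here the hypothesis (N) enters decisively: since $|N| = 0$ it forces $|f(N)| = 0$, so for a.\,e.\ $y$ the set $f^{-1}(y)$ misses $N$ and hence $\#(f^{-1}(y)\setminus N) = N(f,\Omega,y)$; moreover $N(f,\Omega,y) = 0$ for $y \notin f(\Omega)$, where $f(\Omega) = f(N) \cup \bigcup_j g_j(E_j)$ is measurable. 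This is precisely \eqref{eq25}.

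If (N) fails, I would redefine $\tilde f = f$ on $\Omega \setminus N$ and $\tilde f \equiv 0$ on the Borel null set $N$. For any null set $S \subseteq \Omega$ one has $\tilde f(S) \subseteq \{0\} \cup \bigcup_j g_j(S \cap E_j)$, and each $g_j(S \cap E_j)$ is null because $C^1$ maps satisfy (N); hence $\tilde f$ satisfies (N). As $\tilde f = f$ off the null set $N$, $\tilde f$ is still a.\,e.\ approximately differentiable with $D_{\rm a}\tilde f = D_{\rm a} f$ a.\,e., so \eqref{eq25} for $\tilde f$ follows from the previous paragraph. The only genuinely nontrivial inputs here are Whitney's $C^1$-approximation theorem and the classical Lipschitz area formula; granting these, the main point — and the one place the Lusin condition is truly needed — is the a.\,e.\ identification of $\#(f^{-1}(y)\setminus N)$ with $N(f,\Omega,y)$, together with the routine but unavoidable measurability bookkeeping for $f(\Omega)$ and for the multiplicity functions.
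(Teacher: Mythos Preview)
The paper does not prove this theorem; it is cited as Federer's area formula with references to \cite{federer2}, \cite[Theorem~3.2.3]{Federer} and \cite{hajlasz}, so there is no in-paper argument to compare against. Your proof is the standard one --- Whitney decomposition into disjoint Borel pieces $E_j$ on which $f$ agrees with a $C^1$ map $g_j$, the classical Lipschitz area formula on each piece, monotone convergence to sum, and the Lusin condition (N) to identify $\#(f^{-1}(y)\setminus N)$ with $N(f,\Omega,y)$ for a.\,e.\ $y$ --- and it is correct, including the redefinition step for the case when (N) fails.

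One minor technical point you pass over: if $\varphi$ is only Lebesgue (not Borel) measurable, $\varphi\circ f$ need not be measurable a priori, so the left side of \eqref{eq25} is not obviously defined. The standard remedy is to first establish \eqref{eq25} for Borel $\varphi\geq 0$ (your argument does this), then for general measurable $\varphi\geq 0$ replace it by a Borel $\tilde\varphi$ with $\tilde\varphi=\varphi$ outside a null set $Z$; applying the Borel case to $\indi_Z$ shows $\int_{f^{-1}(Z)}|J_{\rm a}f|=0$, hence $J_{\rm a}f=0$ a.\,e.\ on $f^{-1}(Z)$, so both sides of \eqref{eq25} are unchanged when passing from $\tilde\varphi$ to $\varphi$.
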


\begin{cor} \label{T26}
    Let $\Omega$ be an open set in $\R^n$ and let $f:\Omega \to \R^n$ be continuous and a.\,e.\ approximately differentiable. 
     \begin{enumerate}[(i)]
        \item If $f$ satisfies the Lusin condition (N), then for any measurable set $A \subset \Omega$, we have $\int_{A} |J_{\rm a} f(x)| \, dx \geq |f(A)|$.
    \end{enumerate}
        Additionally, assume that $f$ is injective a.\,e. Then
    \begin{enumerate}[(i)]
        \setcounter{enumi}{1}
        \item for any Borel set $E \subset \Omega$, $\int_{E} |J_{\rm a} f(x)| \, dx \leq |f(E)|$;
        \item for any measurable set $A$, $A \subset \Omega$, on which $f$ satisfies the Lusin condition (N), $\int_A |J_{\rm a}f(x)| \, dx = |f(A)|$.
    \end{enumerate}
\end{cor}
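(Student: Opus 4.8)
The plan is to refine Federer's area formula (Theorem~\ref{1T7}) into a statement that keeps track of the multiplicity of $f$ \emph{relative to a prescribed measurable subset of the domain}, and then to read off all three assertions from it. Using Whitney's characterization recalled in Section~\ref{s2}, I would fix $C^1$ maps $g_j:\R^n\to\R^n$ with $|\{f\neq g_j\}|\to 0$ and, letting $A_j=\{f=g_j\}$, decompose $\Omega=Z\cup\bigcup_{j\geq 1}G_j$ with $G_1=A_1$, $G_j=A_j\setminus\bigcup_{i<j}A_i$ and $Z=\Omega\setminus\bigcup_j A_j$; then $|Z|=0$, the $G_j$ are pairwise disjoint measurable sets, and $f=g_j$ on $G_j$. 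When $f$ is injective a.\,e.\ I would additionally enlarge $Z$ by the corresponding null set, so that $f|_{\Omega\setminus Z}$ is genuinely injective. Since the approximate derivative ignores null sets, $D_{\rm a}f=Dg_j$ at a.\,e.\ point of $G_j$, so applying the classical area formula for $C^1$ (locally Lipschitz) maps on each measurable piece $E\cap G_j$ and summing by monotone convergence gives, for every measurable $E\subseteq\Omega$,
\begin{equation*}
\int_{E}|J_{\rm a}f(x)|\,dx=\int_{E\setminus Z}|J_{\rm a}f(x)|\,dx=\int_{\R^n}\#\big((E\setminus Z)\cap f^{-1}(y)\big)\,dy,
\end{equation*}
with the integrand on the right a measurable function of $y$. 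If $f$ satisfies the Lusin condition (N) on $E$, then $|f(E\cap Z)|=0$ and $\#((E\setminus Z)\cap f^{-1}(y))$ can be replaced by $\#(E\cap f^{-1}(y))=N(f,E,y)$, recovering Theorem~\ref{1T7} localized to $E$.

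Granting this identity, the three parts follow quickly. For (i): if $f$ satisfies the Lusin condition (N), then $f(A)$ is measurable for measurable $A$, and $\#(A\cap f^{-1}(y))\geq\indi_{f(A)}(y)$ for every $y$, so $\int_A|J_{\rm a}f|=\int_{\R^n}\#(A\cap f^{-1}(y))\,dy\geq|f(A)|$. For (ii): if $f$ is injective a.\,e.\ and $E$ is Borel, then $f$ is injective on $\Omega\setminus Z$, so $\#((E\setminus Z)\cap f^{-1}(y))\in\{0,1\}$ for every $y$, equal to $1$ precisely when $y\in f(E\setminus Z)$; since this count is a measurable function of $y$, the set $f(E\setminus Z)$ is measurable (even though $f$ need not satisfy the Lusin condition (N) here), and the identity gives $\int_E|J_{\rm a}f|=|f(E\setminus Z)|\leq|f(E)|$, the right-hand side being measurable by Theorem~\ref{T17}. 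For (iii): if $f$ is injective a.\,e.\ and satisfies the Lusin condition (N) on the measurable set $A$, the same computation yields $\int_A|J_{\rm a}f|=|f(A\setminus Z)|$, while $f(A)=f(A\setminus Z)\cup f(A\cap Z)$ with $|f(A\cap Z)|=0$ because $A\cap Z$ is a null subset of $A$; hence $|f(A)|=|f(A\setminus Z)|=\int_A|J_{\rm a}f|$.

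The only genuine obstacle is the first step, i.e.\ the passage from Theorem~\ref{1T7} to the multiplicity identity above. It is standard, but it must be carried out carefully on non-open measurable sets (so one really needs the Whitney decomposition and the classical area formula on measurable subsets, rather than applying Theorem~\ref{1T7} directly on an open set), and in part~(ii) the absence of the Lusin condition (N) means one cannot assume in advance that $f(E\setminus Z)$ is measurable --- this is exactly what the $\{0,1\}$-valued measurable multiplicity function provides. A minor point to keep in mind throughout is that the sets $Z$, $G_j$ and the exceptional set for a.\,e.\ injectivity can all be taken measurable, so that $E\cap G_j$ and $A\setminus Z$ are legitimate domains for the classical area formula.
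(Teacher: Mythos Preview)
Your argument is correct. The localized multiplicity identity you set up via the Whitney decomposition is exactly the engine behind Federer's theorem, and once it is in hand all three parts fall out as you describe; in particular, your treatment of~(ii) --- extracting measurability of $f(E\setminus Z)$ from the $\{0,1\}$-valued measurable count rather than from any Lusin hypothesis --- is clean and correct.

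The paper proceeds differently: it takes Theorem~\ref{1T7} as a black box rather than unpacking it. For~(i) it simply applies~\eqref{eq25} with $\varphi=\indi_{f(A)}$ and uses $N(f,\Omega,y)\geq 1$ on $f(A)$. For~(ii) it invokes the \emph{second} clause of Theorem~\ref{1T7}: the existence of a redefinition $\tilde f$ of $f$ on a Borel null set $Z$ such that $\tilde f$ satisfies the Lusin condition~(N); one may absorb the a.e.-injectivity exceptional set into $Z$, so that $N(\tilde f,\Omega,y)=1$ for a.e.\ $y\in\tilde f(E\setminus Z)$, and then~\eqref{eq25} with $\varphi=\indi_{\tilde f(E\setminus Z)}$ gives $\int_E|J_{\rm a}f|\leq|f(E)|$. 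Part~(iii) is then read off from~(i) and~(ii). Your route trades a citation for a reconstruction: it is more self-contained and makes transparent why no Lusin hypothesis is needed in~(ii), at the cost of redoing work already packaged in Theorem~\ref{1T7}. The paper's route is shorter precisely because that theorem (with its redefinition clause) has already been stated.
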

\begin{proof}
Claim (i) follows from applying Theorem~\ref{1T7} to the measurable function $\indi_{f(A)}$ and observing that for any $y \in f(A)$, $N(f, \Omega, y) \geq 1$. 

Let us start proving (ii) by noting that by Theorem~\ref{T17}, $f(E)$ is measurable. Let $\tilde{f}$ be the redefinition of $f$ on the Borel set $Z$ of zero measure so that $\tilde{f}$ satisfies the Lusin condition (N) and $f = \tilde{f}$ on $\Omega \setminus Z$. This implies that $J_{\rm a} f= J_{\rm a} \tilde{f}$ a.\,e.\ on $\Omega \setminus Z$. Without loss of generality, we can assume that $f$ is injective on $\Omega \setminus Z$. Hence, $N(\tilde{f}, \Omega, y) =1$ a.\,e.\ on $\tilde{f}(E\setminus Z)$. Indeed, for any $y \in \tilde{f}(E \setminus Z) \setminus \tilde{f}(Z)$, the preimage $\tilde{f}^{-1}(y)$ is a singleton, because $\tilde{f}=f$ on $\Omega \setminus Z$ and $f$ is injective on that set. Now, we apply Theorem~\ref{1T7} to the measurable function $\indi_{\tilde{f}(E\setminus Z)}$ to get
\begin{equation*}
    \int_{E} |J_{\rm a} f(x)| \, dx \leq \int_{\tilde{f}(E\setminus Z)} N(\tilde{f}, \Omega, y) \, dy = \int_{\tilde{f}(E \setminus Z)} 1 \, dy = |f(E \setminus Z)| \leq |f(E)|.
\end{equation*}

Claim (iii) follows from (i) and (ii). It is sufficient to assume that $A$ is measurable to know that $f(A)$ is measurable because $f$ satisfies the Lusin condition (N).
\end{proof}

\begin{rmk} \label{T27}
Let $f: \Omega \to \R^n$ be continuous, a.\,e.\ approximately differentiable and injective a.\,e. It follows from Corollary~\ref{T26} (ii) that $J_{\rm a}f$ is locally integrable.
\end{rmk}

\section{Examples \ref{T2a} and \ref{1T3}} \label{s3}

\begin{proof}[Example \ref{T2a}]
As mentioned in the Introduction, this example relies heavily on Ponomarev's example from~\cite{Ponomarev}, which was then described using \emph{frame-to-frame} mappings in~\cite{KauhanenKoskelaMaly} and then in~\cite[Chapter 4]{Distortion}. We refer the reader there for the precise description of the construction. Recall that $Q = (0,1)^n$.

By \cite[Theorem 4.10]{Distortion}, there is a homeomorphism $h: Q \to Q$ and two Cantor-type sets $C_p \subset Q$ and $C_z \subset Q$ such that
\begin{itemize}
    \item $|C_p| > 0$ and $|C_z| = 0$;
    \item $h$ maps $C_z$ onto $C_p$ (thus it does not satisfy the Lusin condition (N))
    \item $h$ is a uniform limit of Lipschitz homeomorphisms $h_k: Q \to Q$.
\end{itemize}
Homeomorphism $h$ is essentially the one discovered by Ponomarev in~\cite{Ponomarev}. It is possible to construct such $h$ so that $h \in W^{1,p}(Q,Q)$ for $p <n$, nonetheless we will not use its Sobolev regularity. For each $k$, $h_{k+1}$ is a~modification of $h_k$, so for large $k$, $h$ coincides with $h_k$ on a~set of large measure.

Set $g = h^{-1}$ so that $g$ maps the set $C_p$ onto the set $C_z$. By \cite[Theorem 4.15]{Distortion}, $g$ is Lipschitz and thus it satisfies the Lusin condition (N). It can be constructed in the same way as $h$---it is a uniform limit of $h_k^{-1}$ and $g$ coincides with $h_k^{-1}$ on a set of large measure.

We now set $f_k := h_k \circ g$. Mappings $f_k$ are clearly Lipschitz homeomorphisms which converge uniformly to $f=\id$. Observe that, as $k$ increases, $f_k$ coincides with $\id$ on larger and larger sets. Since $g(C_p) = C_z$ and $h_k$ are Lipschitz, for every $k$ we have
$$
|f_k(C_p)| = 0 \text{ whereas } |f(C_p)| = |C_p| > 0,
$$
which shows that \eqref{eq3} holds and that \eqref{eq1} does not, even for compact subsets of $Q$.

Note that it follows from the area formula (Corollary~\ref{T26}~(iii)) that $Jf_k = 0$ a.\,e.\ on a~set of positive measure $C_p$. This implies that $Jf_k$ does not converge in measure to $Jf=1$ and, as a~result, that $Df_k$ do not converge in measure to $Df = \mathrm{Id}$, see Lemma~\ref{T13} for details.
\end{proof}

\begin{proof}[Example \ref{1T3}]
Let $P_k$ be the cube centered at $(1/2, \ldots, 1/2)$ of side length $1/k$, $k=1, 2, \ldots$ with sides parallel to coordinate axes. Set $g(x) := \tfrac{1}{2} \indi_Q (x)$ and
\begin{equation*}
g_k(x) := \tfrac{k^n}{2} \indi_{P_k} (x) + \tfrac{1}{2(1 - 1/k^n)} \indi_{Q \setminus P_k}(x).
\end{equation*}
Note that $g_k$ converge to $g$ in measure and that $\int_Q |g_k(x)| \, dx = 1$ for all $k$. Let
\begin{equation*}
   T_k (x) :=
\left[
\begin{array}{ccccc}
g_k(x)       &      0       &   \ldots   &   0      &     0      \\
0       &      1       &   \ldots   &   0      &     0      \\
\vdots  &   \vdots     &   \ddots   &  \vdots  &    \vdots  \\
0       &      0       &   \ldots   &   1      &     0      \\
0       &      0       &   \ldots   &   0      &    1      \\
\end{array}
\right]\ \
 \text{ and } T(x):=
 \left[
\begin{array}{ccccc}
g(x)       &      0       &   \ldots   &   0      &     0      \\
0       &      1       &   \ldots   &   0      &     0      \\
\vdots  &   \vdots     &   \ddots   &  \vdots  &    \vdots  \\
0       &      0       &   \ldots   &   1      &     0      \\
0       &      0       &   \ldots   &   0      &    1      \\
\end{array}
\right].
\end{equation*}
Then $T_k$ converge in measure to $T$.

By \cite[Theorem 1.4]{GGH1}, we find a.\,e.\ approximately differentiable homeomorphisms $f_k, f: Q \to \R^n$, which satisfy the Lusin condition (N) and such that $D_{\rm a} f_k = T_k$, $D_{\rm a}f = T$ a.\,e on $Q$. Therefore, $D_{\rm a} f_k$ converge in measure to $D_{\rm a} f$. Clearly, $J_{\rm a} f_k = g_k$. It is a standard exercise to check that $\{g_k\}_k$ are not uniformly integrable. This shows that the properties (i) and (iii) hold.

We now check property (ii). Let $C$ be the cube centered at $(1/2, \ldots, 1/2)$ of side length $1/2$ with sides parallel to coordinate axes. Then, since $P_k \subseteq C$ for all $k \geq 2$,
\begin{align*}
\lim_{k \to \infty} |f_k(C)| &= \lim_{k \to \infty} \int_{C} g_k(x) \, dx = \lim_{k \to \infty} \tfrac{1}{2} + \tfrac{1}{2(1-1/k^n)}(2^{-n} + k^{-n}) = \tfrac{1}{2} + \tfrac{1}{2^{n+1}} \\
&> |f(C)| = \tfrac{1}{2^{n+1}}.
\end{align*}
\end{proof}

\section{Proofs of Theorems \ref{T1} and \ref{1T1}} \label{s4}

\begin{lemma} \label{T13}
Let $\mu$ be a~finite Radon measure on $\R^n$ and $h_k: \R^n \to \R^n$ a~sequence of $\mu$-measurable mappings converging in measure $\mu$ to $h: \R^n \to \R^n$. Then, for any continuous function $\phi: \R^n \to \R$, functions $\phi \circ h_k$ converge to $\phi \circ h$ in measure $\mu$.
\end{lemma}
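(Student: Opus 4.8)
The plan is to reduce to the case of $\mu$-a.e.\ convergence by the standard subsequence principle, which is available precisely because $\mu$ is finite. Recall that for a finite measure $\mu$, a sequence $g_k$ of $\mu$-measurable functions converges to $g$ in measure $\mu$ if and only if every subsequence of $(g_k)_k$ admits a further subsequence converging to $g$ $\mu$-a.e. Here one direction is the elementary fact that a.e.\ convergence on a finite measure space implies convergence in measure (via Egorov, or directly by dominated convergence applied to $\min(1,|g_k-g|)$), and the other is the classical extraction of an a.e.-convergent subsequence from a sequence convergent in measure. I would invoke this principle in both directions.

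Concretely, let $(\phi\circ h_{k_j})_j$ be an arbitrary subsequence of $(\phi\circ h_k)_k$. Since $h_k\to h$ in measure $\mu$, so does $h_{k_j}$, and hence there is a further subsequence $(h_{k_{j_l}})_l$ with $h_{k_{j_l}}(x)\to h(x)$ for $\mu$-a.e.\ $x\in\R^n$. By continuity of $\phi$, at every such point of convergence $\phi(h_{k_{j_l}}(x))\to\phi(h(x))$, so $\phi\circ h_{k_{j_l}}\to\phi\circ h$ $\mu$-a.e. As this argument applies to every subsequence, the subsequence principle yields $\phi\circ h_k\to\phi\circ h$ in measure $\mu$. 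I would also record the trivial point that each $\phi\circ h_k$ is $\mu$-measurable, being the composition of a $\mu$-measurable map with a continuous (hence Borel) function, and likewise for $\phi\circ h$, so that the statement is meaningful.

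A more hands-on route, which avoids the subsequence principle, runs as follows. Fix $\eps>0$; since $\mu$ is finite and $h$ is $\mu$-measurable, choose $R>0$ with $\mu(\{|h|>R\})$ as small as desired, use uniform continuity of $\phi$ on the closed ball $\overline{B(0,R+1)}$ to pick $\eta\in(0,1]$ with $|\phi(a)-\phi(b)|\le\eps$ whenever $|a|,|b|\le R+1$ and $|a-b|\le\eta$, and then note the inclusion
\[
\{|\phi\circ h_k-\phi\circ h|>\eps\}\subset\{|h|>R\}\cup\{|h_k-h|>\eta\},
\]
whose second set has $\mu$-measure tending to $0$. The only point requiring any care, in either approach, is exactly this control of the range of $h$ — using finiteness of $\mu$ to reduce to a region where $\phi$ is uniformly continuous, since a general continuous $\phi$ need not be uniformly continuous on all of $\R^n$; everything else is routine.
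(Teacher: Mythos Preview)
Your proposal is correct. Your secondary ``hands-on'' route is essentially the paper's own argument: the paper fixes $\eps>0$, uses Lusin's theorem to produce a compact set on which $h$ is bounded by some $M$ (playing the role of your set $\{|h|\le R\}$), exploits uniform continuity of $\phi$ on $\overline{B}(0,M+1)$ to choose $\delta\in(0,1)$, and then bounds $\mu(\{|\phi\circ h_k-\phi\circ h|>\eps\})$ by the measure of the complement of the ``bounded'' set plus $\mu(\{|h_k-h|>\delta\})$. The only cosmetic difference is that you obtain the large-value cutoff from $\mu(\{|h|>R\})\to 0$ directly rather than via Lusin.

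Your primary approach via the subsequence principle is a genuinely different and somewhat cleaner route: by passing to an a.e.-convergent subsequence you can apply continuity of $\phi$ pointwise, bypassing entirely the need to localize to a region where $\phi$ is uniformly continuous. The cost is that one must invoke (or recall) the metrizability of convergence in $\mu$-measure for finite $\mu$ to justify the subsequence characterization of convergence; the paper's direct approach is more self-contained in that it unpacks everything at the level of $\eps$--$\delta$. Either argument is entirely adequate for this standard lemma.
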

\begin{proof}
Fix $\eps > 0$. Given any $ \eta > 0$, Lusin's theorem yields a~compact set $K \subset \R^n$ with $\mu(\R^n \setminus K) < \eta/2$ on which $h$ is bounded by some positive number $M$. Function $\phi$ is uniformly continuous on the ball $\overline{B}(0, M+1)$, so we can choose $\delta \in (0,1)$ s.\,t.\ if $|y_1 - y_2| \leq \delta$, then $|\phi(y_1) - \phi(y_2)| < \eps$. Then we find $k_o$ such that the complement of the set $A_\delta:=\{x: \, |h_k(x) - h(x)| \leq \delta\}$ has measure smaller than $\eta/2$.

If $x \in K \cap A_\delta$, then $h_k(x), \, h(x) \in B(0, M+1)$ and $|\phi(h_k(x)) - \phi(h(x))| < \eps$. Consequently,
$$
\
\mu(\{x: |\phi(h_k(x)) - \phi(h(x))| > \eps\}) \leq \mu(\R^n \setminus K) +  \mu(\R^n \setminus A_\delta) < \eta.
$$
The claim follows from the arbitrariness of $\eps$ and $\eta$.
\end{proof}

We are now in the position to prove Theorem~\ref{T1}, which links property~\eqref{eq1} with $L^1$-convergence of Jacobians.

\begin{proof}[Proof of Theorem \ref{T1}]
Fix any compact set $K \subset \Omega$. Lemma~\ref{T13} for the finite measure $\Lb^n \mres \, \Omega$ implies that $\indi_K J_{\rm a} f_k$ converge to $\indi_K J_{\rm a} f$ in measure. In view of Vitali's convergence theorem, it suffices to show that the sequence $\{\indi_K J_{\rm a} f_k\}_k$ is uniformly integrable.

Fix $\eps >0$. Because $f$ satisfies the Lusin condition (N) and $\indi_K J_{\rm a}f$ is integrable, there is $\eta >0$ such that for every Borel set $E$ with $|E| < \eta$, $|f(E \cap K)| < \eps$. This follows from the area formula from Corollary~\ref{T26}~(i). By Corollary~\ref{T26}~(ii) and~\eqref{eq1}, there is $k_o$ such that for any Borel set $E$ with $|E| <\eta$,
$$
\sup_{k \geq k_o} \int_E |\indi_K J_{\rm a} f_k(x)| \, dx \leq \sup_{k \geq k_o} |f_k(E\cap K)| \leq 2 |f(E \cap K)| < 2\eps.
$$
This shows uniform integrability of the family $\{\indi_K J_{\rm a} f_k\}_k$ for $k \geq k_o$. Therefore, $\{\indi_K J_{\rm a} f_k\}_k$ for $k = 1, \ldots, k_o$ is uniformly integrable as well, since any finite family of integrable functions is uniformly integrable.
\end{proof}

Next, we prove Theorem~\ref{1T1}. We start with a lemma which essentially says that Lebesgue measure is upper semicontinuous w.\,r.\,t.\ Hausdorff convergence, as mentioned in the Introduction. Indeed, \eqref{eq51} proves that $f_k(K)$ converge to $f(K)$ in Hausdorff distance. Recall that given a compact set $C$ and a positive number $\delta$, we denote $C_\delta := \{x: \, \dist(x, C) < \delta\}$. The Hausdorff distance $d$ is defined for any nonempty compact sets $K, C$ as
$$
d(K, C) := \inf \{ \delta \geq 0: \, K \subset C_\delta \text{ and } C \subset K_\delta \}.
$$

\begin{lemma} \label{T9}
Let $f_k: \Omega \to \R^n$ be a~sequence of continuous mappings converging locally uniformly to a~mapping $f$. Then, for any compact set $K \subset \Omega$,
\begin{equation} \label{eq6}
\limsup_{k \to \infty} |f_k(K)| \leq |f(K)|.
\end{equation}
\end{lemma}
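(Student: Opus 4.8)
The plan is to prove the upper semicontinuity estimate \eqref{eq6} by exploiting the Hausdorff convergence of the sets $f_k(K)$ to $f(K)$ together with the standard fact that Lebesgue measure is upper semicontinuous along a decreasing sequence of compact sets whose intersection is $f(K)$. Concretely, fix a compact set $K \subset \Omega$ and fix $\delta > 0$. By outer regularity of Lebesgue measure, there is an open set $U \supset f(K)$ with $|U| < |f(K)| + \delta$; shrinking $U$ if necessary we may assume $U$ contains the closed $\rho$-neighborhood $\overline{f(K)_\rho}$ of $f(K)$ for some $\rho > 0$ (this uses compactness of $f(K)$: the function $x \mapsto \dist(x, \R^n \setminus U)$ attains a positive minimum on $f(K)$). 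The key step is then to show that $f_k(K) \subset f(K)_\rho \subset U$ for all large $k$, whence $|f_k(K)| \le |U| < |f(K)| + \delta$, and letting $\delta \to 0$ gives \eqref{eq6}.

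The inclusion $f_k(K) \subset f(K)_\rho$ for large $k$ follows directly from uniform convergence on $K$. Since $K \Subset \Omega$ and $f_k \to f$ locally uniformly, there is $k_o$ such that $\sup_{x \in K} |f_k(x) - f(x)| < \rho$ for all $k \ge k_o$. Then for any $x \in K$ and $k \ge k_o$ we have $\dist(f_k(x), f(K)) \le |f_k(x) - f(x)| < \rho$, so $f_k(x) \in f(K)_\rho$. This gives the desired set inclusion, and combining with the measure estimate from the previous paragraph completes the argument.

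I do not expect any serious obstacle here: the statement is genuinely elementary, and the only points requiring a little care are (a) extracting from outer regularity an open set that actually contains a full $\rho$-neighborhood of the compact set $f(K)$ — which is routine via compactness — and (b) making sure $f(K)$ is compact so that these manipulations make sense, which holds because $f$ is continuous and $K$ is compact. It is worth remarking, as the paper's surrounding discussion emphasizes, that only upper semicontinuity is easy; the reverse inequality (lower semicontinuity) is exactly the delicate phenomenon that fails in general (Example~\ref{T2a}) and that the bulk of the paper is devoted to, whereas here locally uniform convergence alone suffices.
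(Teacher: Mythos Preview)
Your proof is correct and follows essentially the same approach as the paper's: both use that $f(K)$ is compact, take an $\eps$- (or $\rho$-)neighborhood of $f(K)$ whose measure is close to $|f(K)|$, and then use uniform convergence on $K$ to trap $f_k(K)$ inside that neighborhood. The only cosmetic difference is that the paper appeals directly to continuity of measure for the decreasing family $(f(K))_\delta$, whereas you pass through outer regularity and extract $\rho$ via the positive minimum of $\dist(\cdot,\R^n\setminus U)$ on $f(K)$; these are equivalent formulations of the same elementary step.
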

\begin{proof}
Clearly, $f$ is continuous and $f(K)$, $f_k(K)$ are compact. For any $\eps >0$, by continuity of measure, there is $\delta > 0$ such that $|(f(K))_\delta| \leq |f(K)| + \eps$. There is also $k_o$ such that for all $k \geq k_o$, for all $x \in K$, $|f_k(x) - f(x)| < \delta$. This implies that
\begin{equation} \label{eq51}
f_k(K) \subset (f(K))_\delta \text{ and } f_k(K) \subset (f(K))_\delta \text{ for all } k \geq k_o.
\end{equation}
Moreover,
$$
|f_k(K)| \leq |(f(K))_\delta| \leq |f(K)| + \eps.
$$
Since $\eps >0$ is arbitrary, \eqref{eq6} holds.
\end{proof}

\begin{proof}[Proof of Theorem~\ref{1T1}]
Let $k_j$ be the subsequence s.\,t.\ $\liminf_{k \to \infty} |f_k(E)| = \lim_{j \to \infty} |f_{k_j}(E)|$. By Lemma~\ref{T13}, we know that $J_{\rm a} f_k$ converge in measure to $J_{\rm a} f$. Therefore, we can choose a subsequence $k_{j_{\ell}}$ for which $J_{a} f_{k_{j_\ell}}$ converge a.\,e.\ to $J_{\rm a} f$. By the area formula from Corollary~\ref{T26} (i) and (ii) and Fatou's inequality, for any Borel set $E \subset \Omega$, we have
\begin{align} \label{1eq3}
\begin{split}
\liminf_{k \to \infty} |f_k(E)| &= \lim_{\ell \to \infty} |f_{k_{j_{\ell}}}(E)| \geq \lim_{\ell \to \infty} \int_{E} |J_{\rm a} f_{k_{j_{\ell}}}(x)| \, dx \\
&\geq \int_E |J_{\rm a} f(x)| \, dx \geq |f(E)|.
\end{split}
\end{align}

Now, Lemma~\ref{T9} implies that~\eqref{eq1} holds for compact sets $K \subset \Omega$. Inequality \eqref{1eq3} also says that for any Borel set $E$ compactly contained in $\Omega$, we have
$$
\infty > |f(\overline{E})| \geq \int_E |J_{\rm a} f(x)| \, dx \geq |f(E)|.
$$
This means that $J_{\rm a}f$ is locally integrable and, therefore, that $|f(E)|$ can be arbitrarily small provided that $|E|$ is sufficiently small.

Take any Borel set $E \Subset \Omega$ and fix $\eps >0$. We can find a compact set $K \subset E$ such that $E\setminus K$ is contained in a compact set $C$ with $|f(C)| < \eps$. We have
\begin{equation*}
\limsup_{k \to \infty} |f_k(E)| \leq \limsup_{k \to \infty} \left( |f_k(K)| + |f_k(C)| \right) \leq |f(K)| + |f(C)| \leq |f(E)| + \eps.
\end{equation*}

\noindent It follows from arbitrariness of $\eps$ and from~\eqref{1eq3} that~\eqref{eq1} holds for any Borel set $E \Subset \Omega$.
\end{proof}

\section{Proof of Theorem~\ref{1T2}} \label{s5}

In this section, we prove Theorem~\ref{1T2}. Firstly, we state a~folklore result, stating that for a~sequence of homeomorphisms, $W^{1,p}_{loc}$ convergence ($p > n-1$ for $n \geq 2$ and $p \geq 1$ for $n=2$) implies local uniform convergence. As we have not found a~reference to this fact, we include its proof after the short proof of Theorem~\ref{1T2}.
\begin{prop} \label{T16}
Suppose that $p > n-1$ for $n \geq 3$ and $p \geq 1$ for $n = 2$. Let $f_k: \Omega \to \R^n$ be homeomorphisms, $f: \Omega \to \R^n$ be continuous and let $f_k$ converge to $f$ in $W^{1,p}_{loc}(\Omega, \R^n)$. Then $f_k$ converge to $f$ locally uniformly on $\Omega$.
\end{prop}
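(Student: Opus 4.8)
The plan is to reduce to a local, subsequential statement. Since every compact subset of $\Omega$ is covered by finitely many closed cubes $\Subset\Omega$, it suffices to prove uniform convergence on an arbitrary such cube $Q_0$. Moreover, a sequence of reals tends to $0$ as soon as every subsequence has a further subsequence tending to $0$, so it is enough to show that every subsequence of $(f_k)$ has a sub-subsequence converging to $f$ uniformly on $Q_0$. I therefore fix $Q_0$, an open cube $Q_1$ with $Q_0\Subset Q_1\Subset\Omega$, pass to an arbitrary subsequence which I relabel $(f_k)$ (still $W^{1,p}(Q_1)$-convergent to $f$), and look for a uniformly convergent sub-subsequence.

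First I would run a Fubini-plus-Morrey argument on $(n-1)$-dimensional slices. For a coordinate direction $e_j$ and a value $t$, write $\Sigma_{j,t}:=Q_1\cap\{x_j=t\}$. The ACL/Fubini property of Sobolev functions gives
\[
\int \bigl\lVert (f_k-f)|_{\Sigma_{j,t}}\bigr\rVert_{W^{1,p}(\Sigma_{j,t})}^{p}\,dt \;\le\; C\,\lVert f_k-f\rVert_{W^{1,p}(Q_1)}^{p}\;\longrightarrow\;0 ,
\]
so, passing to a subsequence once for each of the $n$ directions, I may assume that for every $j$ and almost every $t$ the restriction $(f_k-f)|_{\Sigma_{j,t}}$ tends to $0$ in $W^{1,p}(\Sigma_{j,t})$. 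Because $\Sigma_{j,t}$ is $(n-1)$-dimensional and $p>n-1$ (when $n=2$ one uses instead $n-1=1\le p$ together with the embedding $W^{1,1}(I)\hookrightarrow C^{0}(\overline I)$ of an interval), Morrey's inequality upgrades this to uniform convergence of $f_k$ to $f$ on compact subsets of $\Sigma_{j,t}$ for a.e.\ $t$. Call such $t$ \emph{good} for direction $j$; good values have full measure, hence are dense.

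The heart of the matter is then a topological trapping step turning boundary control into interior control. Fix $\eps>0$ and, using uniform continuity of $f$ on $\overline{Q_1}$, choose $\delta>0$ so that $\diam f(\overline B)<\eps$ for every box $B\subset Q_1$ with $\diam B<\delta$. By density of good values, pick finitely many good hyperplanes in each direction, fine enough that the induced rectangular grid covers $Q_0$ by finitely many closed boxes $B^1,\dots,B^N\subset Q_1$ of diameter $<\delta$, each of whose faces lies on a good slice; then $\rho_k:=\max_i\sup_{\partial B^i}|f_k-f|\to 0$. Now for $z\in Q_0$ pick $i$ with $z\in\overline{B^i}$. Since $f_k$ is a homeomorphism, $f_k(\partial B^i)$ is a topological $(n-1)$-sphere, and a standard argument (Jordan--Brouwer separation together with invariance of domain, or the multiplicativity of the topological degree) shows that $f_k(B^i)$ equals the bounded component of $\R^n\setminus f_k(\partial B^i)$, which is contained in the convex hull $\mathrm{conv}\bigl(f_k(\partial B^i)\bigr)$. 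Hence $\diam f_k(\overline{B^i})\le \diam f_k(\partial B^i)\le \diam f(\partial B^i)+2\rho_k\le\eps+2\rho_k$, and, fixing a corner $c_i$ of $B^i$,
\[
|f_k(z)-f(z)| \;\le\; \diam f_k(\overline{B^i}) + |f_k(c_i)-f(c_i)| + \diam f(\overline{B^i}) \;<\; 2\eps + 2\rho_k + \max_{1\le i\le N}|f_k(c_i)-f(c_i)| .
\]
The right-hand side does not depend on $z$ and tends to $2\eps$, so $\limsup_{k}\sup_{Q_0}|f_k-f|\le 2\eps$; letting $\eps\to0$ finishes the extraction, and with it the proof.

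I expect the topological step to be the only genuine obstacle: slice estimates only see $(n-1)$-dimensional pieces, and Example~\ref{T2a} shows that without an injectivity or nondegeneracy hypothesis there need be no interior control at all, even under uniform convergence. It is precisely the homeomorphism property that confines $f_k(B^i)$ to the bounded complementary component of $f_k(\partial B^i)$, hence to the convex hull of a set which, being $\eps$-close to the small set $f(\partial B^i)$, is itself of diameter at most $\eps+2\rho_k$. Everything else (Fubini on slices, Morrey's embedding in dimension $n-1$, the exhaustion of $\Omega$ and the subsequence reductions) is routine; the one point that needs a little bookkeeping is making the finitely many good-slice choices simultaneously so that the final bound is uniform in $z\in Q_0$.
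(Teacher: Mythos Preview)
Your proof is correct and follows the same strategy as the paper: a Fubini argument to locate good $(n-1)$-dimensional boundaries, Morrey's inequality (the fundamental theorem of calculus when $n=2$) on those boundaries to upgrade $W^{1,p}$-convergence to uniform convergence, and then Jordan--Brouwer separation to transfer boundary control to the interior of each small box. The paper packages the Fubini step as a grid-translation lemma (Lemma~\ref{T15}) and traps $f_{k_j}(Q_i)$ in a $\tau$-neighbourhood of $f(Q_i)$ rather than via the convex hull of $f_k(\partial B^i)$, but these are cosmetic differences.
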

\noindent The continuity assumption on $f$ cannot be dropped because of the standard example creating a \emph{cavity}. There is a sequence of homeomorphisms $f_k$ defined on the unit ball in $\R^n$ which converge in $W^{1,p}$ for $p < n$ to the discontinuous mapping $f$ given by $f(x) = x/|x|$ for $x \neq 0$ and $f(0) = 0$. One can take $f_k(x) = x (1+k)^{1/2}(1 +k|x|^2)^{-1/2}$, as in~\cite[Remark 2.11 (b)]{Distortion}.

\begin{proof}[Proof of Theorem~\ref{1T2}]
By Proposition~\ref{T16}, $f_k$ converge locally uniformly to $f$. Recall that every Sobolev mapping is a.\,e.\ approximately differentiable and the approximate and weak derivatives coincide. By Theorem~\ref{1T1}, \eqref{eq1} holds for any Borel set $E \Subset \Omega$. As observed in Remark~\ref{T27}, area formula implies that the Jacobian of a homeomorphism is always locally integrable, hence the $L^1_{loc}(\Omega)$-convergence of Jacobians follows from Theorem~\ref{T1}.
\end{proof}

We start the proof of Proposition~\ref{T16} by a~technical lemma similar to \cite[Lemma 26]{GoldsteinHajlasz19}.

\begin{lemma} \label{T15}
Let $p \geq 1$, $K$ be a compact subset of $\Omega$ and $g_k: \Omega \to \R^n$, $g_k \in W^{1,p}_{loc}(\Omega, \R^n)$, be a sequence of mappings converging to zero in $W^{1,p}_{loc}(\Omega, \R^n)$. Then for any $\eta >0$, it is possible to find a finite family of closed cubes $\{Q_i\}_{i=1}^N$ and a subsequence $g_{k_j}$ such that
\begin{enumerate}[(i)]
    \item $K \subset \bigcup_{i=1}^N Q_i \subset \Omega$ and $\diam Q_i < \eta$;
    \item $g_{k_j} \in W^{1,p}(\partial Q_i, \R^n)$ for $i = 1, \ldots, N$;
    \item $\lVert g_{k_j} \rVert_{1, p, \partial Q_i} \xrightarrow{j \to \infty} 0$ for every $i = 1, \ldots, N$.
\end{enumerate}
\end{lemma}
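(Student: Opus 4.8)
The plan is to deduce this from a Fubini-type argument applied to slices of $\Omega$ by a grid of hyperplanes. Since $g_k \to 0$ in $W^{1,p}_{loc}$, after passing to a subsequence we may assume $\|g_k\|_{1,p,K'} \to 0$ fast, where $K'$ is a compact neighborhood of $K$ with $K' \subset \Omega$; say $\sum_k \|g_k\|_{1,p,K'} < \infty$ by extracting a subsequence (not relabelled). Fix $\eta > 0$ small enough that $K' \Subset \Omega$ contains an $\eta$-neighborhood of $K$.

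First I would set up a family of candidate grids. For a vector $t = (t_1,\dots,t_n)$ with $|t_i|$ small, let $\mathcal{G}_t$ be the grid of axis-parallel hyperplanes $\{x_i = t_i + m\rho\}$ for $m \in \mathbb{Z}$ and $i = 1,\dots,n$, where $\rho \in (0, \eta/\sqrt{n})$ is fixed so that the resulting closed cubes have diameter $< \eta$. Each such grid induces a finite family of closed cubes $\{Q_i^t\}_{i=1}^{N}$ that cover $K$ and are contained in $K' \subset \Omega$ (using that $\rho$ and hence the cubes are small). The boundaries $\partial Q_i^t$ lie on the hyperplanes of $\mathcal{G}_t$. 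The key point is that, by the Fubini theorem / trace theory for Sobolev functions restricted to hyperplanes, for a.e.\ choice of $t$ (in the relevant small cube of shift parameters) the restriction $g_k|_{H}$ to each hyperplane $H$ of $\mathcal{G}_t$ lies in $W^{1,p}(H \cap K', \R^n)$ with $\int \|g_k|_H\|_{1,p}^p$ integrated over the shift comparable to $\|g_k\|_{1,p,K'}^p$, and similarly for $Dg_k$; this is the standard fact that for $u \in W^{1,p}(U)$, the slices $u|_{\{x_i = s\}}$ are in $W^{1,p}$ for a.e.\ $s$ with $\int \|u|_{\{x_i=s\}}\|_{1,p}^p\,ds \le C\|u\|_{1,p,U}^p$.

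Next I would average over the shift parameter and use summability. Define, for a fixed grid $\mathcal{G}_t$, the quantity $\Phi_k(t) := \sum_{i=1}^N \|g_k\|_{1,p,\partial Q_i^t}^p$ whenever the traces are defined (finitely many hyperplanes, finitely many cubes). By the slicing estimate above, $\int \Phi_k(t)\,dt \le C_\rho \|g_k\|_{1,p,K'}^p$, where the integral is over the parameter cube and $C_\rho$ depends on $\rho, n$ but not on $k$. Summing over $k$ and applying Tonelli, $\int \sum_k \Phi_k(t)\,dt \le C_\rho \sum_k \|g_k\|_{1,p,K'}^p < \infty$, so for a.e.\ $t$ we have $\sum_k \Phi_k(t) < \infty$, hence $\Phi_k(t) \to 0$ as $k \to \infty$. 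Fix one such good $t$; the associated cubes $\{Q_i := Q_i^t\}_{i=1}^N$ then satisfy (i) by construction, (ii) because $\Phi_k(t)<\infty$ forces each trace to be in $W^{1,p}(\partial Q_i)$, and (iii) because $\Phi_k(t) \to 0$ forces each summand $\|g_k\|_{1,p,\partial Q_i}^p \to 0$. Since we already passed to a subsequence to get summability, the subsequence $g_{k_j}$ in the statement is this one.

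The main obstacle I anticipate is making the slicing estimate fully rigorous on the \emph{boundary of a cube} rather than on a single hyperplane: $\partial Q_i$ is a union of $2n$ faces lying on $n$ distinct hyperplanes, and one must control the trace of $g_k$ on each face (a bounded $(n-1)$-dimensional region), including integrability of $Dg_k$ restricted there, uniformly in the shift. This is handled by the standard ACL characterization of Sobolev functions together with Fubini: for $u \in W^{1,p}_{loc}$ and a.e.\ hyperplane in a given direction, $u$ and its gradient restrict to $W^{1,p}$ and $L^p$ respectively on that hyperplane, with the average of the slice norms bounded by the norm on the slab. A secondary technical point is ensuring the good parameters $t$ for the $n$ different directions can be chosen simultaneously — but since the bad sets in each coordinate direction have measure zero and we integrate over the full parameter cube in $\R^n$, a.e.\ $t$ is simultaneously good for all directions, so this causes no real difficulty. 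The comparison of $\|g_k\|_{1,p,K'}$ to the full $W^{1,p}_{loc}$ convergence is immediate from the definition of $W^{1,p}_{loc}$ convergence once $K'$ is fixed.
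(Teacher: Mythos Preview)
Your argument is correct and follows essentially the same approach as the paper: both average the $W^{1,p}$-integrand over shifts of a fixed cubical grid via a Fubini-type estimate to locate a good shift $t$ where traces on all the cube boundaries exist and converge to zero. The only cosmetic difference is that you first extract a subsequence with summable norms and then apply Tonelli to $\sum_k \Phi_k(t)$, whereas the paper first handles trace existence for each $k$ by smooth approximation on the shifted grid and then extracts an a.e.-convergent subsequence from the $L^1(B_\eta)$-convergent shift integrals; both devices are standard and lead to the same conclusion.
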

Here, the mappings $g_k$ are defined everywhere, i.\,e., they are fixed representatives. We will use this lemma for $g_k = f_k -f$, where $f_k, f$ are homeomorphisms from Theorem~\ref{1T2} and we do not want to modify $f_k, f$ even on a set of measure zero.

\begin{proof}
We find $\eps >0$ such that $K_{4\eps} = \{x: \, \dist(x, K) < 4\eps \}$ is compactly contained in $\Omega$. Without loss of generality, we can assume that $\eta < \eps$. Consider the cubes centered at $\eta/(2\sqrt{n}) \mathbb{Z}^n$ with sides parallel to the coordinate axes. We take those which have nonempty intersection with $K_\eta$ and call $G$ the grid consisting of the boundaries of these cubes.

Note that the diameter of each such cube equals $\eta/2 < \eta$. Even if $G$ is translated by a vector $v$ with $|v| < \eta$, the cubes, whose boundaries constitute $G + v$, cover $K$ and are contained in $K_{4\eps}$ and hence in $\Omega$. The rest of the proof is devoted to finding an appropriate translation vector $v_o$.

Suppose that $F: K_{4\eps} \to [0, \infty[$, $B_{\eta}(0) := \{x\in \R^n: |x| < \eta\}$. Following \cite[p.\ 135]{White}, we have 
\begin{align} \label{eq52}
\begin{split}
    \int_{B_\eta(0)} \int_G F(x + v) \, dx \, dv &= \int_G \int_{B_\eta(0)} F(x + v) \, dv \, dx = \int_G \int_{G_\eta} F(z) \, dz \, dx\\
    &\leq \mathcal{H}^{n-1}(G) \int_{K_{4\eps}} F(z) \, dz.
\end{split}
\end{align}
In the first equality, we use Fubini's theorem and in the second---simple change of variables.

For each $k$, we can find a sequence of smooth mappings $h_k^m \in C^\infty(K_{4 \eps}, \R^n) \cap W^{1,p}(K_{4\eps}, \R^n)$ convergent to $g_k$ in $W^{1,p}(K_{4\eps}, \R^n)$. Using \eqref{eq52} for
$$
F(x) = |g_k(x) - h_k^m(x)|^p + |Dg_k(x) - Dh_k^m(x)|^p,
$$
we see that when $m \to \infty$, the function $x \mapsto \int_G F(x+v) \, dx$ converges in $L^1(B_\eta(0))$ to zero. Therefore, there is a subsequence (which we do not relabel) which converges to zero for a.\,e.\ $v \in B_\eta(0)$. We denote with $V_k$ the set of $v \in B_\eta(0)$, for which this pointwise convergence takes place. For all $k$, $|V_k| = |B_\eta(0)|$.

For any $v \in V_k$, we have found a sequence of smooth mappings convergent to $g_k$ in $W^{1,p}(G+v, \R^n)$. Completeness of Sobolev spaces implies that $g_k \in W^{1,p}(G+v, \R^n)$. Moreover, a standard argument shows that
$$
D \left( g_k |_{G + v} \right) = Dg_k |_{G + v},
$$
see the proof of \cite[Section 5.2, Theorem 2]{EvansPDE} for details.

Now, using \eqref{eq52} for $F(x) = |g_k(x)|^p + |Dg_k(x)|^p$ and the same argument as before, we find a subsequence (which we relabel this time) $k_j$ such that, when $j \to \infty$, the function
\begin{equation} \label{eq53}
x \mapsto \int_G |g_{k_j}(x+v)|^p + |Dg_{k_j}(x+v)|^p \, dx
\end{equation}
converges to zero for a.\,e.\ $v \in B_\eta(0)$. Hence, we can find $v_o \in \bigcap_{k=1}^\infty V_k$ for which also \eqref{eq53} converges to zero. Indeed, since each $V_k$ was of full measure, so is its countable intersection.

The set $G + v_o$ consists of boundaries of the required cubes. It follows from the reasoning presented at the beginning that (i) holds. Properties (ii) and (iii) follow directly from the construction.
\end{proof}

\begin{proof}[Proof of Proposition~\ref{T16}]
Given a~compact set $K \subset \Omega$, we shall show that from every subsequence of $f_k$, which we do not relabel, we can choose a~further subsequence, which converges to $f$ uniformly on $K$. We fix $\eps >0$ and choose $\eta \in (0, \eps)$ from uniform continuity of $f$ on $K$ so that if $|x - y| < \eta$, $|f(x) - f(y)| < \eps$. For this $\eta$ and the sequence $f_k - f$, by Lemma~\ref{T15}, we find cubes $Q_i$, $i = 1, \ldots, N$, and a~subsequence $f_{k_j} - f$ satisfying
\begin{enumerate}[(i)]
    \item $K \subset \bigcup_{i=1}^N Q_i \subset \Omega$, $\diam Q_i < \eta$;
    \item $f_{k_j} - f \in W^{1,p}(\partial Q_i, \R^n)$ for $i = 1, \ldots, N$;
    \item $\lim_{j \to \infty} \lVert f_{k_{j}} - f \rVert_{1,p,\partial Q_i} = 0$ for $i = 1, \ldots, N$.
\end{enumerate}
For each $i$, we can choose $j_i$ so that for $j \geq j_i$,
\begin{equation} \label{eq19}
    \lVert f_{k_j} - f \rVert_{1,p, \partial Q_i} \leq \eps.
\end{equation}
Moreover, we can choose yet another subsequence out of $f_{k_j}$, which we do not relabel, which converges $\Hs^{n-1}$-a.\,e.\ on $\partial Q_i$. For each $i=1, \ldots, N$, we find $x_i \in \partial Q_i$ so that (possibly by enlarging $j_i$),
\begin{equation} \label{eq17}
| f_{k_j}(x_i) - f(x_i)| \leq \eps.
\end{equation}
We can assume that $x_i$ is a Lebesgue point for $f_{k_j} -f$, which will be useful in applying Morrey's inequality below. Setting $j_o:=\max_{1 \leq i \leq N} j_i$ means that \eqref{eq19} and \eqref{eq17} hold for $j \geq j_o$ for all $i = 1, \ldots, N$.

We shall now consider the case $n \geq 3$; we will use Morrey's inequality on $\partial Q_i$, see \cite[Section 4.5.3, Theorem 3]{EvansGariepy}. We can do it because $\partial Q_i$ is bi-Lipschitz equivalent to a ball in $\R^{n-1}$ and $p > n-1$. Recall that Morrey's inequality holds for any two Lebesgue points of a given function\footnote{See the proof of \cite[Thorem 3.23]{Kinnunen} for a clear explanation of this fact.} so that for a.\,e. $x \in \partial Q_i$ and the chosen Lebesgue point $x_i$, we have
\begin{align} \label{eq50}
\begin{split}
|f_{k_j}(x) - f(x) - f_{k_j}(x_i) + f(x_i)| &\leq C_i \diam(\partial Q_i)^{1 - (n-1)/p} \lVert f_{k_j} - f \rVert_{1, p, \partial Q_i} \\
& \leq C_i \eps^{1 - (n-1)/p} \, \eps = C_i \eps^{2 - (n-1)/p}.
\end{split}
\end{align}
The constant $C_i$ depends on $n,\, p$ and the Lipschitz constant in the $\partial Q_i$-to-ball equivalence. We used here also~\eqref{eq19} and (ii).

Collecting \eqref{eq50} and \eqref{eq17} yields for a.\,e.\ $x \in \partial Q_i$ and for all $j \geq j_o$,
\begin{equation*}
|f_{k_j}(x) - f(x)| \leq C_i \eps^{2 -(n-1)/p} + \eps \leq \tau/2, \, \text{ where } \tau:= 2(\max_{1 \leq i \leq N} C_i \eps^{2 - (n-1)/p} + \eps).
\end{equation*}
In fact, continuity of $f_{k_j}$ and triangle inequality imply that even
\begin{equation} \label{eq20a}
    \sup_{x \in \partial Q_i} |f_{k_j}(x) - f(x)| \leq \tau.
\end{equation}
Inequality~\eqref{eq20a} implies that $f_{k_j}(\partial Q_i) \subset f(\partial Q_i)_{\tau} \subset f(Q_i)_{\tau}$ for all $j \geq j_o$ and for all $i = 1, \ldots, N$. By Jordan separation theorem \cite[Theorem 3.29]{FonsecaGangbo}, $f_{k_j}(\partial Q_i)$ is homeomorphic to $\mathbb{S}^{n-1}$ and divides $\R^{n}$ into two connected components, one bounded and one unbounded. Since $f_{k_j}(Q_i)$ is compact, it must be contained in the closure of the bounded component and thus $f_{k_j}(Q_i) \subset f(Q_i)_{\tau} \text{ for all } i = 1, \ldots, N \text{ and } j \geq j_o.$

Consequently, we get for all $i = 1, \ldots, N$ and $j \geq j_o$,
$$
\sup_{x \in Q_i} |f_{k_j}(x) - f(x)| \leq \diam f(Q_i)_{\tau} \leq \diam f(Q_i) + \tau \leq \eps + \tau.
$$
Clearly, $\eps$ is arbitrary and $\tau$ converges to zero with $\eps$ going to zero and $K \subset \bigcup_{i=1}^N Q_i$. This finishes the proof of local uniform convergence in the case $n \geq 3$.

In the case $n=2$ and $p \geq 1$, we use the fundamental theorem of calculus instead of Morrey's inequality. Squares $Q_i$ have been chosen to guarantee that $f_{k_j}$ is absolutely continuous on $\partial Q_i$ for every $j$. Indeed, $f_{k_j} \in W^{1,1}(\partial Q_i)$ and since $\partial Q_i$ is essentially a segment, it follows that $f_{k_j}$ is absolutely continuous. Therefore, for any $x \in \partial Q_i$,
$$
|f_{k_j}(x) - f(x)| \leq \lVert f_{k_j} - f \rVert_{1,1,\partial Q_i} + |f_{k_j}(x_i) - f(x_i)| \leq \tilde{C}_i \eps.
$$
Above, we used also \eqref{eq19} and \eqref{eq17} and H\"{o}lder's inequality. The rest of the argument now follows like in the case $n \geq 3$. This finishes the proof.
\end{proof}

\section{Proof of Theorem~\ref{T6}} \label{s6}
To keep the paper self-contained, we present below the basics of degree theory in Euclidean spaces, following the book \cite{FonsecaGangbo}.
\begin{defn}
Let $f \in C^1(\overline{\Omega}, \R^n)$, i.\,e., we assume that $f$ admits an extension $\tilde{f}$ to an open set containing $\overline{\Omega}$ on which $\tilde{f}$ is $C^1$. Set $Z_f := \{x \in \overline{\Omega}: \, Jf(x) = 0 \}$ and suppose that $p \in \R^n \setminus (f(Z_f) \cup f(\partial \Omega))$. Then,
\begin{equation} \label{eq36}
\degg (f, \Omega, p) := \sum_{x \in f^{-1} (p)} \mathrm{sgn} (Jf(x)).
\end{equation}
\end{defn}
\noindent Degree for continuous mappings can be defined through smooth approximations, we refer to \cite[Chapter 1]{FonsecaGangbo} for details. 

In the sequel, we will need the following facts about degree, which we gather below for the convenience of the reader. Properties (i) and (ii) are Theorem 2.1 and Theorem 2.3 (1) from \cite{FonsecaGangbo}, respectively.
\begin{prop} \label{T8}
Suppose that $f, g: \overline{\Omega} \to \R^n$ are continuous.
\begin{enumerate}[(i)]
    \item If $p \notin f(\partial \Omega)$ and $\deg (f, \Omega, p) \neq 0$, then there is $x \in \Omega$ with $f(x) = p$.
    \item If $\sup_{x \in \Omega} |g(x) - f(x) | <  \dist(y, f(\partial \Omega))/\sqrt{n}$, then\footnote{The factor $1/\sqrt{n}$ appears here and does not appear in \cite{FonsecaGangbo}, because in \cite{FonsecaGangbo}, $|x|$ for $x \in \R^n$ denotes the maximum norm and this norm is also used to define the distance between a point and a set (denoted there with $\rho$). Here, we use the $2$-norm in both instances.}
    $$\deg(g, \Omega, y) = \deg(f, \Omega, y).$$
\end{enumerate}
\end{prop}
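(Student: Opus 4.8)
The two assertions are classical facts of Brouwer degree theory; indeed the paper itself attributes them to \cite[Theorem 2.1]{FonsecaGangbo} and \cite[Theorem 2.3 (1)]{FonsecaGangbo}, so the honest plan is simply to cite these. The only genuine bookkeeping point is that \cite{FonsecaGangbo} works with the maximum norm (and the associated point-to-set distance), whereas we use the Euclidean norm, which is exactly what forces the factor $1/\sqrt{n}$ into (ii) but not into the cited statement, as explained in the footnote. For completeness I would recall the standard arguments as follows.

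For (i), I would unwind the definition of $\deg(f,\Omega,p)$: it is obtained by approximating $f$ uniformly on $\overline{\Omega}$ by $C^1$ maps having $p$ as a regular value and summing $\operatorname{sgn}$ of the Jacobian over the (finite) preimage of $p$, the result being independent of the chosen approximation once it is sufficiently close to $f$. If $f^{-1}(p)\cap\Omega=\emptyset$, then since also $p\notin f(\partial\Omega)$ we have $p\notin f(\overline{\Omega})$, hence $\delta:=\dist(p,f(\overline{\Omega}))>0$ by compactness of $\overline{\Omega}$; any $C^1$ map $g$ with $\sup_{\overline{\Omega}}|g-f|<\delta$ then also omits $p$ on $\overline{\Omega}$, so the defining sum is empty and $\deg(f,\Omega,p)=0$. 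The contrapositive is exactly (i).

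For (ii), I would invoke homotopy invariance of the degree along the affine homotopy $h_t:=(1-t)f+tg$ for $t\in[0,1]$, so that $h_0=f$ and $h_1=g$. For $x\in\partial\Omega$ one has $|h_t(x)-f(x)|=t\,|g(x)-f(x)|\le\sup_{\overline{\Omega}}|g-f|$, and the hypothesis --- after translating the maximum-norm bound of \cite{FonsecaGangbo} into the Euclidean norm, at the cost of the factor $1/\sqrt{n}$ --- gives $\sup_{\overline{\Omega}}|g-f|<\dist(y,f(\partial\Omega))$. Hence $|h_t(x)-y|\ge\dist(y,f(\partial\Omega))-|h_t(x)-f(x)|>0$ for all $t\in[0,1]$ and all $x\in\partial\Omega$, so $y\notin h_t(\partial\Omega)$ throughout the homotopy; homotopy invariance then makes $t\mapsto\deg(h_t,\Omega,y)$ constant, whence $\deg(g,\Omega,y)=\deg(f,\Omega,y)$.

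I do not expect any real obstacle here: these are textbook results, and both parts are immediate consequences of the construction of the degree and of its homotopy invariance. The only place requiring attention --- and the only reason our statement differs cosmetically from the cited one --- is the comparison of norms in the perturbation bound of (ii).
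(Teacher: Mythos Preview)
Your proposal is correct and matches the paper's approach: the paper gives no proof at all for this proposition, merely citing \cite[Theorem 2.1]{FonsecaGangbo} and \cite[Theorem 2.3 (1)]{FonsecaGangbo}, exactly as you note. Your additional sketches of the standard contrapositive argument for (i) and the affine-homotopy argument for (ii) are correct; the only cosmetic remark is that your homotopy argument in (ii) actually works under the cleaner hypothesis $\sup_{\overline{\Omega}}|g-f|<\dist(y,f(\partial\Omega))$ without any $1/\sqrt{n}$, so that factor is purely an artifact of translating the maximum-norm statement of \cite{FonsecaGangbo} rather than something your direct argument requires.
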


\begin{proof}[Proof of Theorem \ref{T6}]
Fix $\eps > 0$ and observe that for any subset $A \Subset \Omega$, we can find $\eta$ such that if $E$ is a measurable subset of $A$ with $|E| < \eta$, then
\begin{equation} \label{eq23}
|f_k(E)| \leq \int_{E} |J_{\rm a} f_k(x)| \, dx \leq \eps.
\end{equation}
This follows from Corollary~\ref{T26} (i) and local uniform integrability of $\{J_{\rm a}f_k\}_k$. Consequently, given any measurable set $E \Subset \Omega$, we can find a compact subset $K_\eps \subset E$ with $|E \setminus K_{\eps}|$ sufficiently small so that
\begin{equation*}
|f_k(K_{\eps})| \leq |f_k(E)| \leq |f_k(K_\eps)| + \eps.
\end{equation*}
It follows from the arbitrariness of $\eps$ that if \eqref{eq1} holds for all compact subsets of $\Omega$, then it also holds for all measurable sets compactly contained in $\Omega$. Also, in view of Lemma~\ref{T9}, it suffices to prove that for any compact set $K \subset \Omega$,
\begin{equation} \label{eq24}
    \liminf_{k \to \infty} |f_k(K)| \geq |f(K)|.
\end{equation}

Fix $\eps >0$ and a~compact set $K \subset \Omega$. We find an open and connected set $U \Subset \Omega$ such that
$$
K \subset U, \quad |\partial U| = 0, \quad |U \setminus K| < \eta
$$
for a~sufficiently small $\eta >0$, which by~\eqref{eq23} guarantees that
\begin{equation} \label{eq54}
|f_k(U)| \leq |f_k(K)| + \eps \text{ for all } k \in \mathbb{N}.
\end{equation}
One can construct such $U$ by covering $K$ with a~finite number of sufficiently small balls and joining the connected components of that covering with thin tubes. Since $f$ satisfies the Lusin condition (N), $f(\partial U)$ is a~set of measure zero. 

Consequently, the measure of the set $\{x:\, \dist(x, f(\partial U)) < \delta\}$ converges to zero as $\delta \to 0$. We can find a $\delta_o>0$ for which the set $G := \{x: \, \dist(x, f(\partial U)) < \delta_o\}$ has measure $|G| < \eps$ and $G \Subset \Omega$. Moreover, for any $y \in f(U) \setminus G$, we have $\dist(y, f(\partial U)) \geq \delta_o$.

Since $f_k$ is uniformly convergent to $f$ on $U$ (as $U \Subset \Omega)$, we can find $k_o$ such that for all $k \geq k_o$, $\sup_{x \in U} |f_k(x) - f(x)| < \delta_o/\sqrt{n}$. By the choice of $\delta_o$ and Proposition~\ref{T8} (ii), this implies that 
\begin{equation} \label{eq55}
\deg(f_k, U, y) = \deg(f, U, y) \quad \text{for all } k \geq k_o, \, y \in f(U) \setminus G.
\end{equation}
Note that Proposition~\ref{T8} (ii) says, in particular, that $\deg(f_k, U, y)$ is well defined for such $y$ and $k$, i.\,e., that $y \not \in f_k(\partial U)$.

By \eqref{eq55} and assumption~\eqref{eq4} in the statement of the theorem, for almost any $y \in f(U) \setminus G$ and $k \geq k_o$, we have $\deg(f_k, U, y) \neq 0$. This implies that for a.\,e.\ such $y$, there exists $x \in U$ s.\,t.\ $f_k(x) = y$, see Proposition~\ref{T8}~(i). Consequently, for sufficiently large $k$, almost every point of $f(U) \setminus G$ is contained in $f_k(U)$, which allows to conclude that
$$
    |f(U) \setminus G| \leq |f_k(U)|.
$$
It then follows from the choice of $U$ and $G$ and from \eqref{eq54} that for $k \geq k_o$
$$
|f(K)| - \eps \leq |f(U)| - |G| \leq |f(U) \setminus G| \leq |f_k(U)| \leq |f_k(K)| + \eps.
$$
Since $\eps$ is arbitrary, $\liminf_{k \to \infty} |f_k(K)| \geq |f(K)|$, which finishes the proof of~\eqref{eq24} and hence the proof.
\end{proof}

It is well known that condition \eqref{eq4} is satisfied for any homeomorphism, see \cite[Theorem 3.35]{FonsecaGangbo}. Below, we shall give a~weaker condition which guarantees \eqref{eq4} and to this end, we need to use the notion of regular approximate derivative.
\begin{defn}
Let $f: \Omega \to \R^n$ be measurable. We say that $f$ is regularly approximately differentiable at a~point $x \in \Omega$ if $f$ is approximately differentiable at point $x$ with derivative $D_{\rm a} f(x)$ and if there exists a~sequence $r_j \to 0$ s.\,t.\ 
$$
\lim_{j \to \infty} \sup_{| h | = 1} \frac{1}{r_j} | f(x + r_j h) - f(x) - D_{\rm a}f(x) r_j h | = 0.
$$
We use the same notation for approximate derivative which is not necessarily regular.
\end{defn}

Reshetnyak in~\cite{Resh67} in 1967 proved that $f \in W^{1,p}(\Omega, \R^n)$ for $p > n-1$ has regular approximate differential a.\,e in $\Omega$, see \cite{GoffmanZiemer} for an alternative proof. Regularity of approximate derivative is enough for the analogue of the classical degree formula \eqref{eq36} to hold.

\begin{prop} \cite[p.\ 284]{GoldResh} \label{T22}
Let $f: \Omega \to \R^n$ be a~continuous mapping, which satisfies the Lusin condition (N), is regularly approximately differentiable a.\,e.\ and its approximate Jacobian $J_{\rm a} f$ is locally integrable. For any bounded domain $ U \Subset \Omega$ with $|\partial U| = 0$, for almost every $y \in \R^n$, $J_{\rm a} f(x)$ is well defined for $x \in f^{-1}(y)$ and the formula
$$
\deg (f, U, y) = \sum_{x \in f^{-1}(x) \cap U} \mathrm{sgn} \, ( J_{\rm a} f(x) )
$$
holds.
\end{prop}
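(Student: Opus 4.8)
The plan is to reduce, for almost every value $y$, the computation of $\deg(f,U,y)$ to finitely many \emph{local} degree computations at the preimages of $y$ in $U$, to resolve each local computation using the \emph{regular} approximate differentiability of $f$, and then to reassemble the pieces via the additivity/excision property of the degree. The area formula and the Lusin condition~(N) will enter only to guarantee that the set of exceptional values $y$ is Lebesgue-null.

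First I would discard a null set of bad values. Let $D\subset\Omega$ be the set where $f$ fails to be regularly approximately differentiable, let $Z:=\{x\in\Omega\setminus D:\ J_{\rm a}f(x)=0\}$, and put
\[
N^{\ast}:=f(D)\cup f(Z)\cup f(\partial U)\cup\{y:\ N(f,U,y)=\infty\}.
\]
Since $|D|=|\partial U|=0$ and $f$ satisfies the Lusin condition~(N), the sets $f(D)$ and $f(\partial U)$ are null; by Corollary~\ref{T26}(i), $|f(Z)|\le\int_{Z}|J_{\rm a}f|=0$; and applying the area formula (Theorem~\ref{1T7}) to $f|_{U}$ with $\varphi\equiv 1$ gives $\int_{f(U)}N(f,U,y)\,dy=\int_{U}|J_{\rm a}f|<\infty$, because $U\Subset\Omega$ and $J_{\rm a}f\in L^{1}_{loc}$, so that $N(f,U,\cdot)<\infty$ a.e. Hence $|N^{\ast}|=0$. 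I then fix $y\notin N^{\ast}$; for such $y$ the degree $\deg(f,U,y)$ is defined, $f^{-1}(y)\cap\overline{U}=f^{-1}(y)\cap U=:\{x_{1},\dots,x_{m}\}$ is a finite subset of the open set $U$, and at each $x_{i}$ the map $f$ is regularly approximately differentiable with $A_{i}:=D_{\rm a}f(x_{i})$ invertible.

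The heart of the argument is the local formula: for each $x_{i}$ there is a radius $\rho_{i}>0$, taken from the sequence $r_{j}\to0$ furnished by regular approximate differentiability at $x_{i}$ and as small as we wish, such that
\[
\deg(f,B(x_{i},\rho_{i}),y)=\operatorname{sgn}\det A_{i}=\operatorname{sgn}J_{\rm a}f(x_{i}).
\]
To see this I would write $L_{i}(x):=y+A_{i}(x-x_{i})$; on $\partial B(x_{i},\rho_{i})$ one has $|L_{i}(x)-y|\ge c_{i}\rho_{i}$, where $c_{i}>0$ is the least singular value of $A_{i}$, whereas regular approximate differentiability gives $\sup_{\partial B(x_{i},\rho_{i})}|f-L_{i}|=o(\rho_{i})$ along the sequence. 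Thus for $\rho_{i}$ small the linear homotopy $(1-t)f+tL_{i}$ omits $y$ on $\partial B(x_{i},\rho_{i})$, and since the degree depends only on boundary values and is homotopy invariant \cite{FonsecaGangbo}, $\deg(f,B(x_{i},\rho_{i}),y)=\deg(L_{i},B(x_{i},\rho_{i}),y)$, which equals $\operatorname{sgn}\det A_{i}$ by the classical degree formula~\eqref{eq36} applied to the $C^{1}$ map $L_{i}$ at its regular value $y$. Finally I would shrink the $\rho_{i}$ further so that the closed balls $\overline{B}(x_{i},\rho_{i})$ are pairwise disjoint and contained in $U$; since every preimage of $y$ in $\overline{U}$ lies in $\bigcup_{i}B(x_{i},\rho_{i})$, the additivity and excision properties of the degree \cite{FonsecaGangbo} yield
\[
\deg(f,U,y)=\sum_{i=1}^{m}\deg(f,B(x_{i},\rho_{i}),y)=\sum_{i=1}^{m}\operatorname{sgn}J_{\rm a}f(x_{i})=\sum_{x\in f^{-1}(y)\cap U}\operatorname{sgn}J_{\rm a}f(x),
\]
with the usual convention that the sum is empty and $\deg(f,U,y)=0$ when $m=0$.

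I expect the main obstacle to be precisely the local degree formula: this is the only place where \emph{regular} approximate differentiability is indispensable, since ordinary approximate differentiability controls $f$ merely on a set of density one near $x_{i}$ and would permit a thin spike at $x_{i}$ that destroys the local degree. A secondary, more routine point requiring care is the verification that $N^{\ast}$ is genuinely $\mathcal{L}^{n}$-null, which is exactly where Corollary~\ref{T26}, the area formula and the Lusin condition~(N) are used.
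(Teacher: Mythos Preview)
The paper does not provide its own proof of Proposition~\ref{T22}; the result is quoted from \cite[p.~284]{GoldResh} and stated without argument, so there is nothing to compare your proposal against. On its own merits your proof is correct and is the standard route: the null exceptional set $N^{\ast}$ is assembled exactly as one should (Lusin~(N) kills $f(D)$ and $f(\partial U)$, Corollary~\ref{T26}(i) kills $f(Z)$, and Theorem~\ref{1T7} on $f|_{U}$ gives $N(f,U,\cdot)<\infty$ a.e.), the local degree at each $x_{i}$ is obtained from the linear comparison $L_{i}$ via homotopy invariance on a sphere $\partial B(x_{i},\rho_{i})$ of radius taken from the sequence supplied by \emph{regular} approximate differentiability, and excision/additivity reassembles the pieces. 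Two small points worth making explicit in a final write-up: first, the radii $\rho_{i}$ should be chosen from the $x_{i}$-dependent sequences only \emph{after} fixing a common upper bound $\rho>0$ that makes the closed balls pairwise disjoint and contained in $U$; second, the homotopy estimate $|H_{t}-y|\ge c_{i}\rho_{i}-o(\rho_{i})>0$ on $\partial B(x_{i},\rho_{i})$ also shows $y\notin f(\partial B(x_{i},\rho_{i}))$, so the local degrees are indeed defined. Your closing remark that regularity of the approximate differential is indispensable here is exactly the point the paper stresses after the proposition, citing the example from \cite{GoldsteinHajlasz17}.
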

The assumption on regularity of the approximate derivative is necessary, as shown by the example described in \cite{GoldsteinHajlasz17} of an a.\,e.\ approximately differentiable homeomorphism $\Phi$ of the unit cube such that $\degg(\Phi, (0,1)^n, y) = 1$ for any $y \in (0,1)^n$ and whose approximate Jacobian equals $-1$ a.\,e.

Eventually, we get the following
\begin{cor}
If $f: \Omega \to \R^n$ is as in Proposition \ref{T22} and 
$$
J_{\rm a} f > 0 \text{ or } J_{\rm a} f < 0 \text{ a.\,e.\ on } \Omega,
$$
then condition~\eqref{eq4} holds.    
\end{cor}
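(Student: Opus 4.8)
The plan is to read off condition~\eqref{eq4} directly from the degree formula of Proposition~\ref{T22} combined with the Lusin condition (N). Suppose $J_{\rm a}f > 0$ a.e.\ on $\Omega$; the case $J_{\rm a}f < 0$ is symmetric and treated at the end. Fix a bounded domain $U \Subset \Omega$ with $|\partial U| = 0$, and set
\[
N := \{x \in \Omega : f \text{ is not approximately differentiable at } x\} \cup \{x \in \Omega : J_{\rm a}f(x) \leq 0\}.
\]
Since $f$ is (regularly) approximately differentiable a.e., $J_{\rm a}f$ is defined a.e., and by hypothesis $\{J_{\rm a}f \leq 0\}$ is null; hence $|N| = 0$. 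Because $f$ satisfies the Lusin condition (N), $|f(N)| = 0$, and likewise $|f(\partial U)| = 0$.

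Next I would invoke Proposition~\ref{T22}: there is a null set $Z_1 \subset \R^n$ such that for every $y \notin Z_1$ the set $f^{-1}(y) \cap U$ is finite, $J_{\rm a}f(x)$ is defined at each $x \in f^{-1}(y)$, and
\[
\deg(f, U, y) = \sum_{x \in f^{-1}(y) \cap U} \mathrm{sgn}\bigl(J_{\rm a}f(x)\bigr).
\]
Put $Z := Z_1 \cup f(N) \cup f(\partial U)$, a set of measure zero. Take any $y \in f(U) \setminus Z$; then $\deg(f, U, y)$ is defined and the formula above holds. Since $y \notin f(N)$, no point of $f^{-1}(y)$ lies in $N$, so $J_{\rm a}f(x) > 0$ and $\mathrm{sgn}(J_{\rm a}f(x)) = 1$ for every $x \in f^{-1}(y) \cap U$. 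Since $y \in f(U)$, the set $f^{-1}(y) \cap U$ is nonempty, and therefore
\[
\deg(f, U, y) = \#\bigl(f^{-1}(y) \cap U\bigr) \geq 1 \neq 0.
\]
As $Z$ is null, $\deg(f, U, y) \neq 0$ for a.e.\ $y \in f(U)$, which is exactly \eqref{eq4} for this $U$; since $U$ was an arbitrary subdomain compactly contained in $\Omega$ with $|\partial U| = 0$, condition~\eqref{eq4} holds. In the case $J_{\rm a}f < 0$ a.e., the identical argument yields $\deg(f, U, y) = -\#(f^{-1}(y) \cap U) \leq -1 \neq 0$.

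I do not expect any genuine obstacle here. The one point that needs care is that the measure-zero set where $J_{\rm a}f$ is nonpositive or undefined must not be allowed to contribute a wrong sign to the degree formula, and this is precisely what the Lusin condition (N) guarantees, by forcing its image $f(N)$ to be negligible so that it may be discarded along with the exceptional set from Proposition~\ref{T22}. Everything else is a direct substitution into Proposition~\ref{T22}.
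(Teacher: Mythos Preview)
Your argument is correct and is exactly the natural route the paper has in mind; indeed, the paper offers no explicit proof of this corollary, presenting it as an immediate consequence of Proposition~\ref{T22} together with the Lusin condition~(N). Your only extra care beyond a one-line deduction is the explicit removal of $f(N)$ via condition~(N), which is precisely the right way to ensure that every term in the degree sum has the prescribed sign.
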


\bibliography{bib}
\bibliographystyle{abbrv}
\end{document}